\newcounter{mylisti} \newcounter{mylistii}
\newcounter{nest}
\newcommand{\defaultlabel}{}
\newenvironment{mylist}[1]{%
  \addtocounter{nest}{1}
  \ifthenelse{\value{nest}=1}{%
    \renewcommand{\defaultlabel}{(\roman{mylisti})\hfill}}{%
    \renewcommand{\defaultlabel}{(\alph{mylistii})\hfill}}
  % \settowidth{\toowide}{(iii)}
  \begin{list}{\defaultlabel}{%
      \ifthenelse{\value{nest}=1}{\usecounter{mylisti}}{%
        \usecounter{mylistii}}
      
      %\addtolength{\topsep}{1ex}
      \addtolength{\itemsep}{0.5ex}
      \settowidth{\labelwidth}{#1}
    %  \ifthenelse{\lengthtest{\labelwidth > \toowide}}{%
     %   \setlength{\labelwidth}{\toowide}}{}
      \setlength{\leftmargin}{\labelwidth}
      \addtolength{\leftmargin}{\labelsep}}}{\addtocounter{nest}{-1}
\end{list}}
\newcommand{\be}{\ensuremath{\mathbb E}}
\newcommand{\bn}{\ensuremath{\mathbb N}}
\newcommand{\br}{\ensuremath{\mathbb R}}
\newcommand{\cC}{\ensuremath{\mathcal C}}
\newcommand{\cJ}{\ensuremath{\mathcal J}}
\newcommand{\cK}{\ensuremath{\mathcal K}}
\newcommand{\cL}{\ensuremath{\mathcal L}}
\newcommand{\cS}{\ensuremath{\mathcal S}}
\newcommand{\yt}{\ensuremath{\tilde{y}}}
\newcommand{\zt}{\ensuremath{\tilde{z}}}
\newcommand{\At}{\ensuremath{\tilde{A}}}
\newcommand{\vv}{\ensuremath{\boldsymbol{v}}}
\newcommand{\vw}{\ensuremath{\boldsymbol{w}}}
\newcommand{\abs}[1]{\lvert #1\rvert}
\newcommand{\bigabs}[1]{\big\lvert #1\big\rvert}
\newcommand{\biggabs}[1]{\bigg\lvert #1\bigg\rvert}
\newcommand{\sabs}[1]{\left\lvert #1\right\rvert}
\newcommand{\diag}{\operatorname{diag}}
\newcommand{\fss}{\ensuremath{\mathcal F\mathcal S}}
\newcommand{\intp}[1]{\ensuremath{\lfloor #1\rfloor}}
\newcommand{\Bigintp}[1]{\ensuremath{\Big\lfloor #1\Big\rfloor}}
\newcommand{\ip}[2]{\ensuremath{\langle #1,#2\rangle}}
\newcommand{\bigip}[2]{\ensuremath{\big\langle #1,#2\big\rangle}}
\newcommand{\Bigip}[2]{\ensuremath{\Big\langle #1,#2\Big\rangle}}
\newcommand{\join}{\vee}
\newcommand{\meet}{\wedge}
\newcommand{\norm}[1]{\lVert #1\rVert}
\newcommand{\bignorm}[1]{\big\lVert #1\big\rVert}
\newcommand{\biggnorm}[1]{\bigg\lVert #1\bigg\rVert}
\newcommand{\snorm}[1]{\left\lVert #1\right\rVert}
\newcommand{\sgn}{\ensuremath{\mathrm{sign}}}
\newcommand{\spn}{\ensuremath{\mathrm{span}}}
\newcommand{\co}{\mathrm{c}_0}
\newcommand{\coo}{\mathrm{c}_{00}}
\newcommand{\continuum}{\mathfrak{c}}
\newcommand{\md}{\mathrm{d}}
\newcommand{\vare}{\varepsilon}
\newcommand{\varf}{\varphi}
\newcommand{\ds}{\displaystyle}
\newcommand{\ts}{\textstyle}
\newcommand{\eg}{\textit{e.g.,}\ }
\newcommand{\ie}{\textit{i.e.,}\ }
\newtheorem{thm}{Theorem}
\newtheorem{mainthm}{Theorem}
\newtheorem*{mainproblem*}{Problem}
\newtheorem{lem}[thm]{Lemma}
\newtheorem{prop}[thm]{Proposition}
\newtheorem{cor}[thm]{Corollary}
\newtheorem{problem}[thm]{Problem}
\theoremstyle{definition}
\theoremstyle{remark}
\newtheorem*{rem}{Remark}
\newtheorem*{ex}{Example}
\begin{document}

\allowdisplaybreaks

\title[$\cL(\ell_p\oplus \ell_q)$ has infinitely many closed
  ideals]{The algebra of bounded linear operators on $\ell_p\oplus
  \ell_q$ has infinitely many closed ideals}

\author{Th.~Schlumprecht}
\address{Department of Mathematics, Texas A\&M University, College
  Station, TX 77843, USA and Faculty of Electrical Engineering, Czech
  Technical University in Prague,  Zikova 4, 166 27, Prague}
\email{schlump@math.tamu.edu}

\author{A.~Zs\'ak}
\address{Peterhouse, Cambridge, CB2 1RD, UK}
\email{a.zsak@dpmms.cam.ac.uk}

\date{10 September 2014}

\thanks{The first author's research was supported by NSF grant
  DMS1160633. The second author was supported by the 2014 Workshop in
  Analysis and Probability at Texas A\&M University.}
\keywords{Operator ideals, Rosenthal's $X_{p,w}$ space, factorizing
  operators}
\subjclass[2010]{}

\begin{abstract}
  We prove that in the reflexive range $1<p<q<\infty$, the algebra
  $\cL(\ell_p\oplus\ell_q)$ of all bounded linear operators on
  $\ell_p\oplus\ell_q$ has infinitely many closed ideals. This solves
  a problem raised by A.~Pietsch~\cite{pietsch:78}*{Problem 5.3.3}
  in his book, `Operator ideals'.
\end{abstract}

\maketitle

\section{Introduction}

Classifying the closed ideals of the algebra $\cL(\ell_p\oplus\ell_q)$
of bounded linear operators on $\ell_p\oplus\ell_q$ has a long
history. There were several results proved in the 1970s, and the
reader is referred to the book by Pietsch~\cite{pietsch:78}*{Chapter
  5} for details. In particular,~\cite{pietsch:78}*{Theorem 5.3.2}
asserts that for $1\leq p<q<\infty$ there are exactly two maximal
ideals of $\cL(\ell_p\oplus\ell_q)$. These are the closures of the ideals
of operators factoring through $\ell_p$ and $\ell_q$, respectively.
\cite{pietsch:78}*{Theorem 5.3.2} also establishes a one-to-one
correspondence between the set of all other closed, proper ideals of
$\cL(\ell_p\oplus\ell_q)$ and the set of all closed  ideals of
$\cL(\ell_p,\ell_q)$. Here an ideal of $\cL(\ell_p,\ell_q)$ is a
subspace $\cJ$ of $\cL(\ell_p,\ell_q)$ with the property that
$ATB\in\cJ$ whenever $A\in\cL(\ell_q), T\in\cJ$ and
$B\in\cL(\ell_p)$. Pietsch raises the following
problem.
\begin{mainproblem*}[\cite{pietsch:78}*{Problem 5.3.3}]
  For $1\leq p<q$ does $\cL(\ell_p,\ell_q)$ have infinitely many
  closed ideals?
\end{mainproblem*}
Since we are dealing with Banach spaces with bases, it is clear that
the compact operators $\cK=\cK(\ell_p,\ell_q)$ are the smallest
non-trivial (\ie non-zero) closed ideal. Since the formal inclusion
map $I_{p,q}\colon \ell_p\to\ell_q$ is not compact, $\cK$ is a proper
ideal. For anyone well versed in basis techniques, it is a not too
difficult exercise that every operator in $\cL(\ell_p,\ell_q)$ is
strictly singular, and that every non-compact operator factors
$I_{p,q}$. It follows that $\{0\}\subsetneq \cK\subsetneq
\cJ^{I_{p,q}} \subset \cS=\cL(\ell_p,\ell_q)$, where $\cJ^{I_{p,q}}$
is the closure of the ideal of operators factoring through $I_{p,q}$,
$\cS=\cS(\ell_p,\ell_q)$ is the ideal of strictly singular
operators, and moreover any other closed ideal of $\cL(\ell_p,\ell_q)$
must contain $\cJ^{I_{p,q}}$. It is, however, not obvious that
$\cJ^{I_{p,q}}$ is proper. This was shown for $1<p<q<\infty$ by
V.~D.~Milman~\cite{milman:70} who first proved that $I_{p,q}$ is
finitely strictly singular, and then exhibited an operator in
$\cL(\ell_p,\ell_q)$ that is not finitely strictly
singular. (Definitions will be given in Section~\ref{sec:definitions}
below.) The next significant result was proved by B.~Sari,
N.~Tomczak-Jaegermann, V.G.~Troitsky and the first named
author. In~\cite{sari-schlump-tomczak-troitsky:07} they showed that
for $1<p<2<q<\infty$, the
ideal $\fss$ of finitely strictly singular operators and the ideal
$\cJ^{\ell_2}$ generated by operators factoring through $\ell_2$ are
proper, distinct, and distinct from the ones above. So in this case
$\cL(\ell_p.\ell_q)$ has at least four non-trivial, proper closed
ideals. Later the first named author~\cite{schlump:12} found two more
ideals, again in the range $1<p<2<q<\infty$, namely the ideals
generated by operators factoring through the formal inclusion maps
$I_{p,2}$ and $I_{2,q}$, respectively. These new ideals lie between
$I_{p,q}$ and $\fss\cap\cJ^{\ell_2}$, and hence the fact they are
incomparable shows
that $\cJ^{I_{p,q}}\neq\fss\cap\cJ^{\ell_2}$, and thus there are at
least seven non-trivial, proper, closed ideals in $\cL(\ell_p,\ell_q)$
when $1<p<2<q<\infty$.

The main result of this paper is a solution of Pietsch's question in
the reflexive range.
\begin{mainthm}
  \label{mainthm:infinite}
  For all $1<p<q<\infty$ there is a chain of size the continuum
  consisting of closed ideals in $\cL(\ell_p,\ell_q)$ that lie between
  the ideals $\cJ^{I_{p,q}}$ and $\fss$.
\end{mainthm}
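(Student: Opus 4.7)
The plan is to exhibit a family $\{T_\lambda : \lambda \in \Lambda\}$ of operators in $\cL(\ell_p,\ell_q)$, indexed by a linearly ordered set $\Lambda$ of cardinality $\continuum$, with three properties: (i) each $T_\lambda$ is finitely strictly singular; (ii) no $T_\lambda$ factors through $I_{p,q}$, so $T_\lambda\notin\cJ^{I_{p,q}}$; and (iii) for $\lambda<\mu$ the closed ideal $\cJ^{T_\lambda}$ is strictly contained in $\cJ^{T_\mu}$. Property (ii), combined with the fact recorded in the introduction that every non-compact closed ideal contains $\cJ^{I_{p,q}}$, places each $\cJ^{T_\lambda}$ strictly above $\cJ^{I_{p,q}}$, while (i) keeps all of them inside $\fss$, so the chain $(\cJ^{T_\lambda})_{\lambda\in\Lambda}$ sits between $\cJ^{I_{p,q}}$ and $\fss$ as required.

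For the construction I would exploit the Rosenthal $X_{p,w}$-type machinery signalled in the abstract. Fix block lengths $n_k\nearrow\infty$ and for each weight sequence $w=(w_k)\subset(0,1)$ define $T_w\colon\ell_p\to\ell_q$ so that its restriction to the $k$-th coordinate block (a copy of $\ell_p^{n_k}$) is $w_k$ times an embedding that lands in the $k$-th block of $\ell_q$ after passing through a carefully chosen finite-dimensional piece of an $X_{p,w^{(k)}}$-space, rather than through $I_{p,q}^{(n_k)}$ directly. Boundedness is governed by the weights; finite strict singularity follows by coupling Milman's theorem on each block with $w_k\to 0$; and the $X_{p,w}$-twisting is precisely what prevents $T_w$ from factoring through $I_{p,q}$, as it shifts the effective ``scale'' of each block away from the levelled behaviour of the formal inclusion. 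To obtain a continuum-size chain, I would parameterize by $\alpha\in(0,1)$ via a monotone formula $w_k^\alpha$ and verify the forward inclusion $\cJ^{T_\alpha}\subseteq\cJ^{T_\beta}$ for $\alpha<\beta$ through an explicit factorization using restriction to a subsequence of blocks and a diagonal multiplier.

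The main obstacle, and the technical heart of the argument, will be the strict inclusion $\cJ^{T_\alpha}\subsetneq\cJ^{T_\beta}$: showing $T_\beta\notin\cJ^{T_\alpha}$ for $\alpha<\beta$. This demands a factorization-stable invariant of operators that is controlled (up to uniform constants) under arbitrary compositions $A T B$ and under norm limits, but that depends monotonically and faithfully on $\alpha$ when evaluated on $T_\alpha$. A natural candidate is an asymptotic profile $(\kappa_N(T))_{N\in\bn}$, where $\kappa_N(T)$ is the largest $c>0$ admitting operators $A\colon\ell_p^N\to\ell_p$ and $B\colon\ell_q\to\ell_q^N$ of norm at most one with $\norm{BTAx}_q\geq c\norm{x}_p$ for all $x\in\ell_p^N$. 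One then needs matching lower bounds for $\kappa_N(T_\beta)$ coming from the $X_{p,w^\beta}$-geometry and upper bounds for $\kappa_N(S)$ whenever $S\in\cJ^{T_\alpha}$, established via a Milman-style finite-dimensional geometric estimate calibrated to the weight profile, followed by a standard compactness and approximation step to transfer the inequality through the ideal closure. Calibrating the parameterisation $\alpha\mapsto w^\alpha$ to ensure that these asymptotic profiles separate continuously in $\alpha$ is what produces the full continuum-size chain.
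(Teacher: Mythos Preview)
Your outline has the right architecture---block-diagonal operators built from finite-dimensional Rosenthal pieces, indexed by weight profiles, with the strict-inclusion step as the crux---and this matches the paper's strategy. But the heart of your proposal, the invariant $\kappa_N$, is where things break down, and the paper's actual mechanism is substantially different.

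The difficulty with $\kappa_N(T)$ is twofold. First, since $d(\ell_p^N,\ell_q^N)=N^{1/p-1/q}$, one has $\kappa_N(T)\le\norm{T}\cdot N^{1/q-1/p}$ for \emph{every} bounded $T$, so all operators share the same crude upper profile; extracting a finer rate that is simultaneously (a) visibly different for $T_\alpha$ versus $T_\beta$ and (b) stable under arbitrary compositions $ATB$ and norm closure is exactly the hard part, and nothing in your sketch indicates how to do it. Second, your parametrisation $w_k^\alpha$ with $w_k\in(0,1)$ gives $w_k^\alpha>w_k^\beta$ for $\alpha<\beta$, so the diagonal multiplier needed to factor $T_\alpha$ through $T_\beta$ is unbounded; passing to a subsequence of blocks does not straightforwardly fix this once the blocks carry genuinely different finite-dimensional geometry rather than mere scalars.

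The paper does not use an isomorphic invariant at all. Instead it builds an explicit separating functional $\Phi\in\cL(\ell_p,\ell_q)^*$ as a weak$^*$-accumulation point of averages $\Phi_m(V)=\frac1m\sum_{i=1}^m\bigip{e^{(m)}_{2,i}}{U^{-1}V(g^{(m)}_i)}$, chosen so that $\Phi(T_{\vw})=1$. Showing $\Phi$ vanishes on $\cJ^{I_{Y_{\vv},Z_q}}$ reduces, via a pointwise norm estimate (Lemma~\ref{lem:F_n-to-ell_2^n}), to proving that for any bounded $B\colon\ell_p\to Y_{\vv}$ the images $B(g^{(m)}_i)$ are on average \emph{flat} in the sup-norm over the basis of $Y_{\vv}$. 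This is the Key Lemma (Lemma~\ref{lem:key}): if the fundamental function of the source blocks $G_m$ is asymptotically dominated by the lower fundamental function of the target $Y$, then $\frac1m\sum_i\norm{B_m(g^{(m)}_i)}_\infty\to 0$. Its proof is a combinatorial averaging argument using Rademacher signs and concavity of the fundamental function, with no analogue in a $\kappa_N$-type framework. The parametrisation is by infinite subsets $M\subset\bn$ taken from a continuum chain under almost-inclusion, with weight sequences calibrated so that $M\subset N$ forces the required asymptotic gap between $\vw_N$ and $\vw_M$. Finally, the range $2<p<q$ is obtained by duality from the case $1<p'<2$, a step your outline omits.
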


Moreover, we obtain the following refinement.

\begin{mainthm}
  \label{mainthm:refinement}
  For all $1<p<2<q<\infty$ there is a chain of size the continuum
  consisting of closed ideals in $\cL(\ell_p,\ell_q)$ that lie between
  $\cJ^{I_{p,q}}$ and $\cJ^{I_{2,q}}$.
\end{mainthm}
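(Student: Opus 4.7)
The plan is to inherit the chain of closed ideals $(\cJ_\alpha)$ produced in Theorem~\ref{mainthm:infinite} and show that, in the narrower range $1<p<2<q<\infty$, each $\cJ_\alpha$ is already contained in $\cJ^{I_{2,q}}$. Since $I_{p,q}=I_{2,q}\circ I_{p,2}$, the bottom of the chain $\cJ^{I_{p,q}}$ lies inside $\cJ^{I_{2,q}}$ automatically, so the only substantive task is to control the operators generating the ideals further up the chain.

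First I would isolate the operators $T_\alpha\in\cL(\ell_p,\ell_q)$ that generate $\cJ_\alpha$ in the proof of Theorem~\ref{mainthm:infinite}. Guided by the paper's stated tools, these are built from Rosenthal's $X_{p,w}$ spaces via weighted block operators whose weight sequences $w^{(\alpha)}$ encode the index $\alpha$; the strict increase of the chain comes from a quantitative obstruction to factoring $T_\beta$ through $T_\alpha$ when $\alpha<\beta$.

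The crux is to factor each $T_\alpha$ through $I_{2,q}$, i.e., to write $T_\alpha=A_\alpha\circ I_{2,q}\circ B_\alpha$ with $B_\alpha\in\cL(\ell_p,\ell_2)$ and $A_\alpha\in\cL(\ell_q)$. In the range $p<2<q$, the $\ell_2$-component of the Rosenthal norm dominates enough of the structure that such a factorization becomes available: $B_\alpha$ is constructed from the weighted $\ell_2$-part of the $X_{p,w^{(\alpha)}}$-norm (bounded on $\ell_p$ because $p<2$ gives a continuous diagonal map into a suitably weighted $\ell_2$), and $A_\alpha$ is a corresponding weighted operator on $\ell_q$ realizing the $\ell_q$-side of the block structure. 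It then follows that $T_\alpha\in\cJ^{I_{2,q}}$, whence $\cJ_\alpha\subseteq\cJ^{I_{2,q}}$ for every $\alpha$.

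The main obstacle is the uniform, compatible construction of the factors $B_\alpha$ and $A_\alpha$: their boundedness must be tracked through the Rosenthal-norm definitions without collapsing the distinguishing $w^{(\alpha)}$-weights that keep the ideals $\cJ_\alpha$ strictly nested, and it must also be checked that no approximation arising from the closure operation in Theorem~\ref{mainthm:infinite} pushes $\cJ_\alpha$ outside $\cJ^{I_{2,q}}$. Once this bookkeeping is in place, the continuum-sized chain produced by Theorem~\ref{mainthm:infinite} restricts to a continuum-sized chain sitting between $\cJ^{I_{p,q}}$ and $\cJ^{I_{2,q}}$, which is precisely the content of Theorem~\ref{mainthm:refinement}.
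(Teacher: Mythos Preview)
Your plan has a genuine gap. The generators of the Theorem~\ref{mainthm:infinite} chain (in the range $1<p<2$) are the operators $T_{\vv}=U_q\circ I_{Y_{\vv},Z_q}\circ P_{\vv}$, where $I_{Y_{\vv},Z_q}$ lands in $Z_q=\big(\bigoplus\ell_2^n\big)_{\ell_q}$ and sends $f^{(n)}_j\mapsto e^{(n)}_{2,j}$. If you attempt the factorization $T_{\vv}=A\circ I_{2,q}\circ B$ with the natural choice $B=I_{Y_{\vv},2}\circ P_{\vv}$ (bounded since $p<2$ and $\ell_p$ has cotype~2), then $I_{2,q}\circ B$ sends $f^{(n)}_j$ to $e^{(n)}_{q,j}$, and so $U_q^{-1}A$ must act as the formal identity $\big(\bigoplus\ell_q^n\big)_{\ell_q}\to\big(\bigoplus\ell_2^n\big)_{\ell_q}$. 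Blockwise this is $\ell_q^n\to\ell_2^n$, of norm $n^{1/2-1/q}\to\infty$ since $q>2$. Inserting diagonal weights does not help: if $B$ sends $f^{(n)}_j\mapsto c_n e^{(n)}_{2,j}$, then boundedness of $A$ forces $c_n\gtrsim n^{1/2-1/q}$, while testing $B$ on the single unit vector $f^{(n)}_1$ shows $\norm{B}\geq c_n$. So the single factorization you describe does not exist, and you offer no other mechanism placing $T_{\vv}$ in $\cJ^{I_{2,q}}$.

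The paper's proof does \emph{not} reuse the Theorem~\ref{mainthm:infinite} chain. It introduces a different family of generators, the formal inclusions $I_{Y_{\vv},q}\colon Y_{\vv}\to\ell_q=\big(\bigoplus\ell_q^n\big)_{\ell_q}$, obtained by post-composing $I_{Y_{\vv},Z_q}$ with the bounded map $I_{Z_q,q}$. For these the containment is trivial, since $I_{Y_{\vv},q}=I_{2,q}\circ I_{Y_{\vv},2}$ and hence $\cJ^{I_{Y_{\vv},q}}\subset\cJ^{I_{2,q}}$. The substantive work is then to show that the new ideals $\cJ^{I_{Y_{\vv},q}}$ are still strictly nested; this is done in a separate result (Theorem~\ref{thm:more-on-special-case}) by rerunning the separating-functional argument of Theorem~\ref{thm:main} with $\ell_q$ replacing $Z_q$. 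That is precisely the step your proposal bypasses.
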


We note that these results provide further examples of separable
reflexive Banach spaces $X$ such that $\cL(X)$ has continuum many
closed ideals. The first such examples were given by
A.~Pietsch~\cite{pietsch:02}.

The paper is organized as follows. In Section~\ref{sec:definitions} we
introduce definitions, notations and certain complemented subspaces of
$\ell_p$ that will later lead to new ideals. A crucial r\^ole is
played here by H.~P.~Rosenthal's famous $X_p$ spaces, which we recall
in some detail. We shall use $\ell_p$-direct sums of
finite-dimensional versions of $X_p$. The so called lower fundamental
function (defined below) of these direct sums will be computed at the
end of Section~\ref{sec:definitions}. In Section~\ref{sec:key-lem}
we prove a key lemma that will be at the heart of the proof of our main
results. The latter will be presented in
Section~\ref{sec:main-results}. We conclude with a list of open
problems in Section~\ref{sec:problems}.

\section{Definitions and known results}
\label{sec:definitions}

In this section we introduce a large number of definitions as well as
some preliminary results. This long section is divided into digestable
chunks.

\subsection{$\ell_p$ spaces}
\label{subsec:lp-spaces}

For $1\leq p\leq \infty$ we denote by $p'$ the conjugate index of
$p$.  So we have $\frac{1}{p}+\frac{1}{p'}=1$. We will denote by
$\{e_{p,i}:\,1\leq i<\infty\}$ the unit vector basis of $\ell_p$ when
$1\leq p<\infty$ and of $\co$ when $p=\infty$. For $n\in\bn$, the unit
vector basis of $\ell_p^n$ will be denoted by
$\big\{e^{(n)}_{p,j}:\,1\leq j\leq n\big\}$.

Fix $p\in(1,\infty)$ and define $Z_p$ to be the $\ell_p$-direct sum
$Z_p=\big( \bigoplus _{n=1}^\infty \ell_2^n \big)_{\ell_p}$. This has
canonical unit vector basis $\big\{ e^{(n)}_{2,j}:\,n\in\bn,\ 1\leq
j\leq n\big\}$. By Kintchine's inequality, the spaces $\ell_2^n$,
$n\in\bn$, are uniformly complemented in $\ell_p$. It follows that
$Z_p$ is also complemented in $\ell_p$, and hence isomorphic to it by
Pe{\l}czy\'nski's Decomposition Theorem. We fix once and for all an isomorphism
$U_p\colon Z_p\to\ell_p$. Although the spaces $Z_p$ and $\ell_p$ are
isomorphic, their canonical unit vector bases $\big\{
e^{(n)}_{2,j}:\,n\in\bn,\ 1\leq j\leq n\big\}$ and $\big\{e_{p,i}:\,
1\leq i<\infty \big\}$, respectively, are very different when $p\neq
2$. This is an example of the following convention that we adopt
throughout this paper. For us a Banach space means a Banach space
together with a fixed basis (which will always be normalized and
1-unconditional). We will use different notation for the same space if
we consider more than one basis.

\subsection{Operator ideals}
\label{subsec:op-ideal}

Let $X$ and $Y$ be Banach spaces. We denote by $\cL(X,Y)$ the space of
all bounded, linear operators from $X$ to $Y$. We write $\cL(X)$ for
$\cL(X,X)$, and we let $I_X\in\cL(X)$ stand for the identity operator
on $X$. If $X$ and $Y$ have fixed bases $(x_i)$ and $(y_i)$,
respectively, such that $(y_i)$ is dominated by $(x_i)$, then we write
$I_{X,Y}$ for the formal inclusion $X\to Y$ defined by
\[
I_{X,Y} \Big( \sum_i a_i x_i \Big) =\sum _i a_i y_i\ ,
\]
which is well defined and bounded. When $X$ (respectively, $Y$) is
$\ell_p$, $1\leq p<\infty$, then we write $I_{p,Y}$ (respectively,
$I_{X,p}$) instead of $I_{X,Y}$. Similarly, we write $I_{\infty,Y}$
instead of $I_{\co,Y}$, etc.

By an \emph{ideal of $\cL(X,Y)$ }we mean a subspace $\cJ$ of
$\cL(X,Y)$ satisfying the ideal property: $ATB\in\cJ$ for all
$A\in\cL(Y), T\in\cJ$ and $B\in\cL(X)$. When $X=Y$ this coincides with
usual algebraic notion of an ideal of the algebra $\cL(X)$. A closed
ideal is an ideal that is closed in the operator norm. Clearly, the
closure of an ideal is a closed ideal.

Our notion of ideal is equivalent to Pietsch's notion of \emph{operator
ideal }\cite{pietsch:78}. The latter is a functor that assigns to each
pair $(V,W)$ of Banach spaces a subspace $\cJ(V,W)$ of $\cL(V,W)$ such
that for all Banach spaces $U, V, W, Z$ and for all $A\in\cL(W,Z),
T\in\cJ(V,W), B\in \cL(U,V)$, we have $ATB\in\cJ(U,Z)$. This is called
a \emph{closed operator ideal }if $\cJ(V,W)$ is a closed subspace of
$\cL(V,W)$ for all $V,W$. Given a (closed) operator ideal $\cJ$, it is
clear that $\cJ(V,W)$ is then an ideal (respectively, closed ideal),
in the above sense, of $\cL(V,W)$ for all  spaces $V,W$. Conversely,
given a (closed) ideal $\cJ$ of $\cL(X,Y)$ in the above sense,
the functor that assigns to $(V,W)$ (the closure of) the set of all
finite sums of operators of the form $ATB$ with $A\in\cL(Y,W),
T\in\cJ$ and $B\in\cL(V,X)$ is a (closed) operator ideal in the sense
of Pietsch for which $\cJ(X,Y)=\cJ$.

In this paper we shall only deal with closed ideals. Recall that
$T\in\cL(X,Y)$ is \emph{strictly singular }if it not an isomorphism on
any infinite-dimensional subspace of $X$, and $T$ is \emph{finitely
  strictly singular }if for all $\vare>0$ there exists $n\in\bn$ such
that for every subspace $E$ of $X$ with $\dim E\geq n$ there exists
$x\in E$ such that $\norm{Tx}<\vare\norm{x}$. We denote by
$\cK(X,Y)\subset \fss(X,Y)\subset \cS(X,Y)$ the ideals of,
respectively, compact, finitely strictly singular and strictly singular
operators. When $X=Y$ these become $\cK(X)\subset \fss(X)\subset
\cS(X)$. It is not hard to see that these are all closed operator
ideals.

For an operator $T\colon U\to V$ between Banach spaces $U$ and $V$, we
let $\cJ^T=\cJ^T(X,Y)$ be the closed ideal of $\cL(X,Y)$ generated by
operators factoring through $T$. Thus $\cJ^T$ is the closure in
$\cL(X,Y)$ of all finite sums of operators of the form $ATB$, where
$A\in \cL(V,Y)$ and $B\in\ \cL(X,U)$. When $U=V$ and $T=I_U$ then we
write $\cJ^U$ instead of $\cJ^{I_U}$.

\subsection{Current state of affairs}
\label{subsec:current}

We recall that for $1\leq p<q<\infty$ the algebra
$\cL(\ell_p\oplus\ell_q)$ has exactly two
maximal ideals, the closures of the ideals of operators factoring
through $\ell_p$ and $\ell_q$, respectively, and the set of all
non-maximal closed ideals of $\cL(\ell_p\oplus\ell_q)$ is in a
one-to-one, inclusion-preserving correspondence with the set of
proper, closed ideals of $\cL(\ell_p,\ell_q)$. These results are
stated in~\cite{pietsch:78}*{Theorem 5.3.2}. We also refer the reader
to Section~2 of~\cite{schlump:12}.

Using the notation established in~\ref{subsec:op-ideal}, the following
diagrams summarize what we know about non-trivial closed ideals of
$\cL(\ell_p,\ell_q)$. When $1 <p<q<\infty$ we have
\[
\cK \subsetneq \cJ^{I_{p,q}} \subset \fss \subsetneq
\cL(\ell_p,\ell_q)\ .
\]
Recall that the last two inclusions are due to
V.~D.~Milman~\cite{milman:70}. Also, if $\cJ$ is a proper, closed
ideal distinct from $\cK$, then $\cJ$ contains $\cJ^{I_{p,q}}$.

Combining the main results of~\cite{sari-schlump-tomczak-troitsky:07}
and~\cite{schlump:12}, we obtain the following picture in the range
$1<p<2<q<\infty$.
\[
\begin{diagram}[w=2.5em,h=3ex]
  &&&& \boxed{\cJ^{I_{p,2}}} &&&& \boxed{\fss} &&&& \\
  &&& \ruTo && \rdTo && \ruTo && \rdTo &&& \\
  \boxed{\cK} & \rTo & \boxed{\cJ^{I_{p,q}}} &&
  \nparallel && \boxed{\fss\cap\cJ^{\ell_2}} &&
  \nparallel && \boxed{\fss\join \cJ^{\ell_2}} & \rTo &
  \boxed{\cL(\ell_p,\ell_q)} \\
  &&& \rdTo && \ruTo && \rdTo && \ruTo &&& \\
  &&&& \boxed{\cJ^{I_{2,q}}} &&&& \boxed{\cJ^{\ell_2}} &&&&
\end{diagram}
\]
Here arrows stand for inclusion. It is not known whether the ideal
$\fss\join \cJ^{\ell_2}$, the smallest closed ideal containing $\fss$ and
$\cJ^{\ell_2}$, is proper. All other inclusions are strict. The ideals
$\cK$ and $\cJ^{I_{p,q}}$ are the smallest, respectively second
smallest ideals. In~\cite{sari-schlump-tomczak-troitsky:07} it was
also shown that any ideal containing an operator not in $\fss$ must
contain $\cJ^{\ell_2}$. It follows that there is no ideal between
$\fss\cap \cJ^{\ell_2}$ and $\cJ^{\ell_2}$, and that $\fss\join
\cJ^{\ell_2}$ is the only immediate successor of $\fss$. Two ideals
connected by $\nparallel$ are incomparable. It is not known whether
$\cJ^{I_{p,q}}$ is a proper subset of $\cJ^{I_{p,2}}\cap\cJ^{I_{2,q}}$.

\subsection{Rosenthal's $X_p$ spaces}
\label{subsec:rosenthal-xp}

In 1970 H.~P.~Rosenthal published an influential
paper~\cite{rosenthal:70a} with important consequences both for the
theory of $\cL_p$ spaces, and for probablity theory. This paper grew
out from his study of sequences of independent random variables with
mean zero in $L_p=L_p[0,1]$ for $1<p<\infty$. It led to the
introduction of the spaces $X_p$ which we now describe. Let
$2<p<\infty$ and $\vw=(w_n)_{n=1}^\infty$ be a sequence in
$(0,1]$. The space $X_{p,\vw}$ is the completion of the space $\coo$
  of finite scalar sequences with respect to the norm
\[
\norm{(a_n)}_{p,\vw}=\Big(\sum \abs{a_n}^p \Big)^{\frac1{p}} \join \Big( \sum
w_n^2 \abs{a_n}^2 \Big)^{\frac12}\ .
\]
Rosenthal proved the following~\cite{rosenthal:70a}*{Theorem 4}. Let
$(f_n)$ be a sequence of independent, symmetric, 3-valued random
variables, and let $Y_p$ be the closed linear span of $(f_n)$ in $L_p$
for $1<p<\infty$. Then $Y_p$ is $K_p$-complemented in $L_p$, where
$K_p$ is a constant depending only on $p$. Moreover, if $p>2$, then
$Y_p$ is isomorphic to $X_{p,\vw}$, where
$w_n=\norm{f_n}_{L_2}/\norm{f_n}_{L_p}$, and $Y_{p'}$ is isomorphic to
$X_{p,\vw}^*$. More precisely, the proof shows that if $1<p<\infty$
and we assume that $\norm{f_n}_{L_p}=1$ for all $n\in\bn$ (as we
clearly may), then there is a projection $P_p\colon L_p\to L_p$ onto
$Y_p$ given by $P_pf=\sum a_n f_n$, where $a_n=\int_0^1 f(x)f_n(x)\,\md
x\cdot \norm{f_n}_{L_2}^{-2}$ for all $n\in\bn$, which satisfies
\begin{align}
  \label{eq:xp-isom-p>2}
  \norm{(a_n)}_{p,\vw} &\leq \norm{P_pf}_{L_p}
  \leq K_p\norm{(a_n)}_{p,\vw} \leq K_p\norm{f}_{L_p} &&\text{if
  }2<p<\infty\ ,\\[2ex]
  \label{eq:xp-isom-p<2}
  \frac1{K_p}\norm{(a_n)}^*_{p',\vw} &\leq
  \norm{P_pf}_{L_p} \leq
  \norm{(a_n)}^*_{p',\vw}\leq K_p\norm{f}_{L_p}&&\text{if }1<p<2\ .
\end{align}
Here in the case $2<p<\infty$ we have $w_n=\norm{f_n}_{L_2}$ for all
$n\in\bn$, whereas if $1<p<2$, then $w_n=\norm{f_n}_{L_2}^{-1}$, and
$\norm{(a_n)}^*_{p',\vw}$ denotes the norm of $\sum a_n e_n^*$ in the
dual space $X_{p',\vw}^*$, and $(e_n^*)$ is the sequence biorthogonal
to the unit vector basis $(e_n)$ of $X_{p',\vw}$. Note that $P_2$ is
simply the orthogonal projection of $L_2$ onto $Y_2$ and $K_2=1$; for
$2<p<\infty$ we obtain $P_p$ as the restriction to $L_p$ of $P_2$, and
for $1<p<2$ we have $P_p=P_{p'}^*$ and $K_p=K_{p'}$. It follows
from~\cite{rosenthal:70a}*{Lemma 2} that in all cases the sequence
$(f_n)$ is a 1-unconditional basis of $Y_p$.

Let $2<p<\infty$. It is easy to see that $X_{p,\vw}$ is isomorphic to
one of the spaces $\ell_2, \ell_p$ and $\ell_2\oplus\ell_p$ unless
$(w_n)$ satisfies
\begin{equation}
  \label{eq:xp-condition}
  \liminf w_n=0\quad\text{and}\quad \sum_{n,\ w_n<\vare}
  w_n^{\frac{2p}{p-2}}=\infty \ \text{for all }\vare>0\ .
\end{equation}
Rosenthal proved that if sequences $(w_n)$ and $(w_n')$ both
satisfy~\eqref{eq:xp-condition}, then the corresponding spaces
$X_{p,\vw}$ and $X_{p,\vw'}$ are isomorphic and distinct from any of
the spaces $\ell_2, \ell_p$ and $\ell_2\oplus\ell_p$.

In this paper we shall use finite-dimensional versions of Rosenthal's
$X_p$ spaces, and we will only need the result about the existence of
well-isomorphic and well-complemented copies in $L_p$. We begin with
some definitions.

Given $2<p<\infty,\ 0<w\leq 1$ and $n\in\bn$, we denote by
$E^{(n)}_{p,w}$ the Banach space
$\big(\br^n,\norm{\cdot}_{p,w}\big)$, where
\[
\bignorm{(a_j)_{j=1}^n}_{p,w}=\left( \sum_{j=1}^n
\abs{a_j}^{p}\right)^{\frac{1}{p}} \join w\left( \sum_{j=1}^n
\abs{a_j}^2\right)^{\frac12}\ .
\]
We write $\big\{ e^{(n)}_j:\,1\leq j\leq n\big\}$ for the unit
vector basis of $E^{(n)}_{p,w}$, and we denote by $\big\{
e^{(n)*}_j:\,1\leq j\leq n\big\}$ the unit vector basis of the dual
space $\big(E^{(n)}_{p,w}\big)^*$, which is biorthogonal to the unit
vector basis of $E^{(n)}_{p,w}$.

Given $1<p<2,\ 0<w\leq 1$ and $n\in\bn$,  we fix once and for all a
sequence $f^{(n)}_j=f^{(n)}_{p,w,j}$, $1\leq j\leq n$, of independent
symmetric, 3-valued random variables with $\norm{f^{(n)}_j}_{L_p}=1$
and $\norm{f^{(n)}_j}_{L_2}=\frac1{w}$ for $1\leq j\leq n$. We then
define $F^{(n)}_{p,w}$ to be the subspace $\spn\big\{ f^{(n)}_j:\,1\leq
j\leq n\big\}$ of $L_p$. It follows from~\eqref{eq:xp-isom-p<2} that
\begin{equation}
  \label{eq:xp-isom-p<2-n}
  \frac1{K_p}\snorm{\sum_{j=1}^n a_je^{(n)*}_j} \leq
  \snorm{\sum_{j=1}^n a_j f^{(n)}_j}_{L_p} \leq
  \snorm{\sum_{j=1}^n a_je^{(n)*}_j}\ ,
\end{equation}
where $\big\{ e^{(n)*}_j:\,1\leq j\leq n\big\}$ is the unit vector basis
of the dual space $\big(E^{(n)}_{p',w}\big)^*$ as defined above.
Since the $f^{(n)}_j$ are 3-valued, $F^{(n)}_{p,w}$ is a subspace of
the span of indicator functions of $3^n$ pairwise disjoint sets. Thus,
we can and will think of $F^{(n)}_{p,w}$ as a subspace of
$\ell_p^{k_n}$, where $k_n=3^n$.
\begin{prop}
  \label{prop:properties-of-Fn}
  Let $1<p<2,\ 0<w\leq 1$ and $n\in\bn$. Then
  \begin{mylist}{(iii)}
  \item
    $\big\{f^{(n)}_{j}:\,1\leq j\leq n\big\}$ is a normalized,
    1-unconditional basis of $F^{(n)}_{p,w}$.
  \item
    There exists a projection
    $P^{(n)}_{p,w}\colon\ell_p^{k_n}\to\ell_p^{k_n}$ onto
    $F^{(n)}_{p,w}$ with $\bignorm{P^{(n)}_{p,w}}\leq K_p$.
  \item
    For each $1\leq k\leq n$ and for every $A\subset \{1,\dots,n\}$
    with $\abs{A}=k$ we have
    \[
    \frac1{K_p}\cdot \Big(k^{\frac{1}{p}} \meet \tfrac{1}{w}
    k^{\frac12}\Big)
    \leq\biggnorm{\sum_{j\in A} f^{(n)}_j} \leq k^{\frac{1}{p}} \meet
    \tfrac{1}{w} k^{\frac12}\ .
    \]
  \end{mylist}
\end{prop}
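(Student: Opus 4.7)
I will treat (i), (ii), (iii) in order; the first two reduce directly to the finite-dimensional case of Rosenthal's results recalled in Section~\ref{subsec:rosenthal-xp}, while (iii) rests on a dual-norm computation.

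For (i), the fact that the independent, symmetric sequence $(f^{(n)}_j)_{j=1}^n$ forms a 1-unconditional basis of its linear span is \cite{rosenthal:70a}*{Lemma~2} applied to this finite sequence (alternatively, equidistribution of $(\epsilon_j f^{(n)}_j)_{j=1}^n$ and $(f^{(n)}_j)_{j=1}^n$ for every sign choice gives it directly), and normalization is built into the construction. For (ii), each $f^{(n)}_j$ is measurable with respect to the finite $\sigma$-algebra of $k_n=3^n$ atoms generated by the joint distribution, and the $L_p$-span of the corresponding indicators is isometric to $\ell_p^{k_n}$ and contains $F^{(n)}_{p,w}$. Rosenthal's projection $P_p$, applied to the finite sequence $(f^{(n)}_j)_{j=1}^n$, maps $L_p$ onto $F^{(n)}_{p,w}$ with norm at most $K_p$, and restricting to this $\ell_p^{k_n}$ yields the required $P^{(n)}_{p,w}$.

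The substantive step is (iii). By \eqref{eq:xp-isom-p<2-n} it suffices to show that whenever $A\subset\{1,\dots,n\}$ with $\abs{A}=k$,
\[
\bignorm{\sum_{j\in A} e^{(n)*}_j}_{(E^{(n)}_{p',w})^*} = k^{\frac{1}{p}} \meet \tfrac{1}{w} k^{\frac12}.
\]
Since the norm on $E^{(n)}_{p',w}$ is the maximum of $\norm{\cdot}_{\ell_{p'}^n}$ and $w\norm{\cdot}_{\ell_2^n}$, its unit ball is $B_{\ell_{p'}^n}\cap \tfrac{1}{w}B_{\ell_2^n}$; taking polars gives the dual-norm infimal-convolution formula
\[
\bignorm{y}_{(E^{(n)}_{p',w})^*} = \inf\bigl\{\norm{u}_{\ell_p^n} + \tfrac{1}{w}\norm{v}_{\ell_2^n}:y=u+v\bigr\}.
\]
Applied to $y=\bi_A$, the symmetry group $S_A\times S_{\{1,\dots,n\}\setminus A}$ fixes $y$ and acts isometrically on $\ell_p^n$ and $\ell_2^n$; averaging an optimal decomposition over this group (and invoking convexity of both summand norms) reduces the problem to decompositions of the form $u=a\bi_A+b\bi_{\{1,\dots,n\}\setminus A}$ and $v=(1-a)\bi_A-b\bi_{\{1,\dots,n\}\setminus A}$. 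Since $\norm{u}_{\ell_p^n}$ and $\norm{v}_{\ell_2^n}$ are both monotone in $\abs{b}$, the optimum forces $b=0$; the remaining convex, piecewise linear function $a\mapsto \abs{a}k^{1/p}+\tfrac{1}{w}\abs{1-a}k^{1/2}$ attains its minimum on $\br$ at $a=0$ or $a=1$, yielding exactly $k^{1/p}\meet \tfrac{1}{w}k^{1/2}$.

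The only step requiring genuine thought is the dual-norm identification, which is a standard convex-duality fact; the remainder of (iii) is a short symmetry-plus-convexity argument, and no deeper Banach-space machinery beyond the material already recalled is required.
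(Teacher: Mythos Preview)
Your argument is correct. Parts (i) and (ii) match the paper exactly. For (iii) you and the paper both reduce via \eqref{eq:xp-isom-p<2-n} to computing $\bignorm{\sum_{j\in A} e^{(n)*}_j}$, but you handle the dual norm differently: the paper evaluates it directly as the supremum of $\sum_{j\in A} a_j$ over the primal unit ball $\{\,\|a\|_{p'}\le 1,\ w\|a\|_2\le 1\,\}$, uses the permutation symmetry of $\|\cdot\|_{p',w}$ to reduce to constant vectors $a_j=t$, and reads off $\max\{kt: kt^{p'}\le 1,\ w^2kt^2\le 1\}=k^{1/p}\meet \tfrac1w k^{1/2}$. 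You instead pass through the polar/inf-convolution identity for the dual of a maximum of two norms and then optimize over decompositions. Both routes are short and rest on the same symmetry idea; the paper's is marginally more elementary since it avoids the polarity/inf-convolution step entirely, while yours makes the structure of the dual norm explicit and would generalize more readily to maxima of several norms.
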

\begin{proof}
  (i) and (ii) follow from the results of H.~P.~Rosenthal,
  \cite{rosenthal:70a}*{Theorem 4} and~\cite{rosenthal:70a}*{Lemma 2},
  that we cited above. By~\eqref{eq:xp-isom-p<2-n} we will have
  proved~(iii) if we show that
  \[
  \snorm{\sum_{j=1}^k e^{(n)*}_j} =k^{\frac{1}{p}} \meet \tfrac{1}{w}
  k^{\frac12}\ ,
  \]
  where $\big\{e^{(n)*}_j:\,1\leq j\leq n\big\}$ is the unit vector
  basis of $\big(E^{(n)}_{p',w}\big)^*$ as defined above. Now, by
  definition, we have
  \[
  \snorm{\sum_{j=1}^k e^{(n)*}_j} = \max \left\{
  \sum_{j=1}^k a_j:\, \sum_{j=1}^k \abs{a_j}^{p'} \leq 1 \text{ and }
  w^2 \sum_{j=1}^k \abs{a_j}^2 \leq 1 \right\}\ .
  \]
  Then by symmetry of $\norm{\cdot}_{p',w}$, the maximum occurs when
  $a_1=a_2=\dots = a_k=t$, say. So
  \[
  \snorm{\sum_{j=1}^k e^{(n)*}_j} = \max \left\{ kt:\,
  kt^{p'}\leq 1 \text{ and } w^2kt^2\leq 1 \right\} = k^{\frac{1}{p}}
  \meet \tfrac{1}{w} k^{\frac12}  \ ,
  \]
  as claimed.
\end{proof}

\begin{rem}
  We mention two extreme examples. When $w=1$, then $E^{(n)}_{p',w}\cong
  \ell^n_2$, and when $w=n^{-\eta}$ with $\eta=\frac{1}{p}-\frac12$,
  then $E^{(n)}_{p',w}\cong \ell^n_{p'}$. In both cases the formal
  identity map is an isometric isomorphism. It follows
  by~\eqref{eq:xp-isom-p<2-n} that if $w=1$, then $F^{(n)}_{p,w}\sim
  \ell_2^n$, and if $w=n^{-\eta}$, then
  $F^{(n)}_{p,w}\sim\ell_p^n$. In both cases the formal identity is a
  $K_p$-isomorphism.
\end{rem}

\subsection{The spaces $Y_{p,\vv}$}
\label{subsec:the-space-Yv}

Fix $1<p<2$, and let $\vv=(v_n)$ be a decreasing sequence in
$(0,1]$. For each $n\in\bn$, let $F_n$ be the subspace
$F^{(n)}_{p,v_n}$ of $\ell_p^{k_n}$ with
basis $\big\{f^{(n)}_j:\,1\leq j\leq n\big\}$ as
defined in Section~\ref{subsec:rosenthal-xp} above. We introduce the
space $Y_{p,\vv}$ defined to be the $\ell_p$-direct sum
$Y_{p,\vv}=\big( \bigoplus _{n=1}^\infty F_n \big)_{\ell_p}$. This is
a $K_p$-complemented subspace of $\ell_p$. Indeed, the diagonal
operator
\[
P_{p,\vv} = \diag\big(P^{(n)}_{p,v_n}\big)
\colon \ell_p\cong \left( \bigoplus _{n=1}^\infty \ell_p^{k_n}
\right)_{\ell_p} \to  \ell_p\cong \left( \bigoplus _{n=1}^\infty
\ell_p^{k_n} \right)_{\ell_p}
\]
is a projection onto $Y_{p,\vv}$,
where $P^{(n)}_{p,v_n}$ is the projection given by
Proposition~\ref{prop:properties-of-Fn}(ii). Furthermore, $Y_{p,\vv}$
is equipped with the normalized, 1-unconditional basis $\big\{
f^{(n)}_j:\,n\in\bn,\ 1\leq j\leq n\big\}$. Note that $Y_{p,\vv}$, as a
complemented subspace of $\ell_p$, is isomorphic to $\ell_p$. However,
we shall never make this identification, and instead consider
$Y_{p,\vv}$ as a complemented subspace of $\ell_p$ with corresponding
projection $P_{p,\vv}$ fixed as above.

We conclude this section by proving a norm estimate,
Lemma~\ref{lem:lower-fund-fn-of-Yv} below, on sums of basis vectors of
$Y_{p,\vv}$. We begin with fixing some notation.
Let $X$ be a Banach space with a fixed basis $(x_i)$ (finite or
infinite). Let $N=\bn$ if $\dim(X)=\infty$, and
$N=\{ 1,2,\dots,\dim(X)\}$ otherwise. We define the \emph{fundamental
  function $\varf_X\colon N\to\br$ of $X$ }by setting
\[
\varf_X(k)=\sup \Big\{ \bignorm{\ts\sum_{i\in A}  x_i}:\, A\subset
N,\ \abs{A}\leq k \Big\}\ ,\qquad k\in N\ .
\]
We then extend the definition of $\varf_X$ to the real interval
$I=\bigcup_{1\leq k<\dim(X)} [k,k+1]$ by linear interpolation. The
fundamental function plays an important r\^ole in the study of so
called greedy bases. Here we shall only need the following facts (see
\eg~\cite{dkosz}*{Section 2}).

\begin{prop}
  \label{prop:fundamental function}
  The functions $t\mapsto\varf_X(t)$ and
  $t\mapsto\frac{t}{\varf_X(t)}$, $t\in I$, are increasing. The
  concave envelope $\psi\colon I\to \br$ of $\varf_X$, \ie
  the (pointwise) smallest concave function dominating $\varf_X$,
  satisfies $\psi(t)\leq 2\varf_X(t)$ for all $t\in I$.
\end{prop}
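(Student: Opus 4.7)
The plan is to prove the three assertions in turn, leaning on $1$-unconditionality of the basis at each step.

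\textbf{Monotonicity of $\varphi_X$.} For integers $k<k+1$ in $N$ and any $A\subset N$ with $\abs{A}\leq k$, pick $j\in N\setminus A$. By $1$-unconditionality $\bignorm{\sum_{i\in A}x_i}\leq \bignorm{\sum_{i\in A\cup\{j\}}x_i}\leq \varphi_X(k+1)$, so taking sup over $A$ gives $\varphi_X(k)\leq \varphi_X(k+1)$. Linear interpolation between consecutive integers preserves monotonicity since each slope is non-negative, so $\varphi_X$ is non-decreasing on all of $I$.

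\textbf{The ratio $t/\varphi_X(t)$ is increasing.} First I establish, for integers $k\leq m$, the inequality $\varphi_X(m)/m\leq \varphi_X(k)/k$ by an averaging argument. For any $A\subset N$ with $\abs{A}=m$, summing over all $k$-element subsets $B$ of $A$ gives
\[
\binom{m-1}{k-1}\sum_{i\in A}x_i = \sum_{B\subset A,\ \abs{B}=k}\sum_{i\in B}x_i,
\]
and the triangle inequality combined with $1$-unconditionality (to take norms inside) yields $\binom{m-1}{k-1}\bignorm{\sum_{i\in A}x_i}\leq \binom{m}{k}\varphi_X(k)$, i.e. $\varphi_X(m)\leq (m/k)\varphi_X(k)$. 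Taking sup over $A$ gives the desired inequality at integers. To extend to $t\in I$, note that on $[k,k+1]$ the function $\varphi_X(t)=at+b$ is affine with intercept $b=(k+1)\varphi_X(k)-k\varphi_X(k+1)\geq 0$, the non-negativity being exactly the integer statement just proved. Hence $\varphi_X(t)/t=a+b/t$ is non-increasing on each segment, and continuity at integers gives monotonicity on $I$, so $t/\varphi_X(t)$ is increasing.

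\textbf{Concave envelope bound.} The concave envelope admits the representation
\[
\psi(t)=\sup\bigl\{\lambda\varphi_X(a)+(1-\lambda)\varphi_X(b):\ a,b\in I,\ \lambda\in[0,1],\ \lambda a+(1-\lambda)b=t\bigr\}.
\]
Fix such $a,b,\lambda$ and assume without loss of generality $a\leq t\leq b$. By monotonicity, $\varphi_X(a)\leq \varphi_X(t)$, and by the second property, $\varphi_X(b)\leq (b/t)\varphi_X(t)$. Using the constraint $(1-\lambda)b=t-\lambda a$,
\[
\lambda\varphi_X(a)+(1-\lambda)\varphi_X(b)\leq \varphi_X(t)\Bigl(\lambda+\tfrac{(1-\lambda)b}{t}\Bigr)=\varphi_X(t)\Bigl(1+\lambda\,\tfrac{t-a}{t}\Bigr)\leq 2\varphi_X(t),
\]
since $(t-a)/t\leq 1$ and $\lambda\leq 1$. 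Taking sup yields $\psi(t)\leq 2\varphi_X(t)$.

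\textbf{Expected obstacle.} None of these steps is genuinely hard; the only step with real content is the averaging argument for $\varphi_X(m)/m\leq \varphi_X(k)/k$, which is where $1$-unconditionality is used non-trivially. The extension from integers to the interpolated interval is routine bookkeeping, as is the concave-envelope computation once the two monotonicity properties are in hand.
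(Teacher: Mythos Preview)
Your proof is correct. Note, however, that the paper does not actually prove this proposition: it simply states the facts and cites \cite{dkosz}*{Section 2}, so there is no in-paper argument to compare yours against. The route you take (the averaging identity for subadditivity of $k\mapsto\varphi_X(k)/k$, then the two-point chord representation of the concave envelope) is the standard one found in that reference.

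Two small remarks on the write-up. First, monotonicity of $\varphi_X$ on integers is immediate from the definition, since the supremum is over sets with $\abs{A}\leq k$ and these families are nested in $k$; you do not need to enlarge $A$ by a point, and hence do not need unconditionality there. Second, in the averaging step the identity
\[
\binom{m-1}{k-1}\sum_{i\in A}x_i=\sum_{B\subset A,\ \abs{B}=k}\ \sum_{i\in B}x_i
\]
together with the ordinary triangle inequality already gives $\bignorm{\sum_{i\in A}x_i}\leq (m/k)\,\varphi_X(k)$; there is nothing for $1$-unconditionality to do in that line, contrary to your parenthetical and your closing comment. In fact the entire proposition holds for any basis, with no unconditionality hypothesis at all. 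None of this affects the validity of your argument.
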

We next introduce the \emph{lower fundamental function
  $\lambda_X\colon N\to \br$ of $X$ }defined by
\[
\lambda_X(k)=\inf \Big\{ \bignorm{\ts\sum_{i\in A}  x_i}:\, A\subset
N,\ \abs{A}\geq k \Big\}\ ,\qquad k\in N\ ,
\]
and extend the definition to $I$ by linear interpolation. It is clear
that $\lambda_X$ is an increasing function on $I$.

\begin{ex}
Proposition~\ref{prop:properties-of-Fn}(iii)
shows that
\[
\frac{1}{K_p} \cdot \Big(k^{\frac{1}{p}} \meet \tfrac{1}{w}
k^{\frac12} \Big)
\leq \lambda_F(k)\leq \varf_F (k) \leq
k^{\frac{1}{p}} \meet \tfrac{1}{w} k^{\frac12}\ ,
\]
where $F=F^{(n)}_{p,w}$ and $1\leq k\leq n$.
\end{ex}

We now turn to the estimate on the lower fundamental function of
$Y_{p,\vv}$, as promised.

\begin{lem}
  \label{lem:lower-fund-fn-of-Yv}
  Given $1<p<2$, let $\vv=(v_n)$ be a decreasing sequence in $(0,1]$
  such that $v_n\geq n^{-\eta}$ for all $n\in\bn$, where
  $\eta=\frac1{p}-\frac12$. Then for each $k\in\bn$ we have
  \[
  \lambda_{Y_{p,\vv}} (k)\geq \frac1{K_p\cdot\sqrt{2}} \cdot
  \frac1{v_l} \cdot
  l\ ,\qquad \text{where }l=\Bigintp{\sqrt{\tfrac{k}2}}\ .
  \]
  (We put $v_0=1$ to cover the case $k=1$.)
\end{lem}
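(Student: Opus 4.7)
The plan is to bound the norm in $Y_{p,\vv}$ of $\sum_{(n,j)\in A}f^{(n)}_j$ from below for an arbitrary subset $A$ of basis vectors with $|A|\geq k$, using the $\ell_p$-direct sum structure together with Proposition~\ref{prop:properties-of-Fn}(iii) and a simple case analysis. Writing $A_n=\{j:(n,j)\in A\}$ and $k_n=|A_n|$, we have $\sum_n k_n\geq k$ and $k_n\leq n$, so
\[
\sum_{n\leq l}k_n\leq \tfrac{l(l+1)}{2}\leq l^2 \qquad (l\geq 1).
\]
Since $l=\intp{\sqrt{k/2}}$ gives $k\geq 2l^2$, this forces $\sum_{n>l}k_n\geq l^2$. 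The cases $l=0,1$ are trivial from $\lambda_{Y_{p,\vv}}(k)\geq 1$ (by 1-unconditionality and normalisation) together with $v_1=1$ (as $v_1\geq 1^{-\eta}=1$) and $K_p\geq 1$, so I would assume $l\geq 2$ from here on.

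First I would use Proposition~\ref{prop:properties-of-Fn}(iii) coordinatewise in the $\ell_p$-direct sum to get
\[
\Bignorm{\sum_{(n,j)\in A}f^{(n)}_j}_{Y_{p,\vv}}^p\geq \frac{1}{K_p^p}\sum_n\gamma_n,\qquad
\gamma_n:=k_n\meet \frac{k_n^{p/2}}{v_n^p}.
\]
For $n>l$ we have $v_n\leq v_l$, and an elementary check (using that $\eta=\tfrac1p-\tfrac12$ and $p/2<1$) gives the unified lower bound $\gamma_n\geq k_n$ if $k_n\leq v_l^{-1/\eta}$ and $\gamma_n\geq k_n^{p/2}/v_l^p$ if $k_n\geq v_l^{-1/\eta}$. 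Partition the indices $n>l$ into these two groups and let $S_A,S_B$ denote the corresponding subsums of $k_n$; since $S_A+S_B\geq l^2$, at least one exceeds $l^2/2$.

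In the first case, $\sum_{n>l}\gamma_n\geq S_A\geq l^2/2$. In the second, I would invoke the subadditivity of $t\mapsto t^{p/2}$ on $[0,\infty)$ (valid because $p/2\leq 1$), which gives $\sum k_n^{p/2}\geq\bigl(\sum k_n\bigr)^{p/2}\geq(l^2/2)^{p/2}$, hence $\sum_{n>l}\gamma_n\geq l^p/(2^{p/2}v_l^p)$. The final step is to verify that these two bounds produce the same conclusion: using the hypothesis $v_l\geq l^{-\eta}$ one obtains
\[
\frac{l^p}{v_l^p}\leq l^{p+p\eta}=l^{1+p/2},
\]
and for $l\geq 2$ this yields $l^p/(2^{p/2}v_l^p)\leq l^2/2$. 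Thus in either case $\sum_{n>l}\gamma_n\geq l^p/(2^{p/2}v_l^p)$, and taking $p$-th roots yields the claimed estimate $\lambda_{Y_{p,\vv}}(k)\geq \tfrac{1}{K_p\sqrt2}\cdot\tfrac{l}{v_l}$.

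The only delicate point is arranging the case split so that both alternatives are controlled by the \emph{same} quantity $l^p/(2^{p/2}v_l^p)$; everything else is a routine chase of exponents. The hypothesis $v_l\geq l^{-\eta}$ is precisely what is needed to ensure the ``sub-$\ell_p$ regime'' bound $S_A\geq l^2/2$ dominates the ``sub-$\ell_2$ regime'' bound, so this is where the asymptotic assumption on $\vv$ really enters.
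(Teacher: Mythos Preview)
Your proof is correct, and its overall architecture matches the paper's: both use Proposition~\ref{prop:properties-of-Fn}(iii) coordinatewise, split the indices into an ``$\ell_p$-regime'' and an ``$\ell_2$-regime'', apply the subadditivity of $t\mapsto t^{p/2}$, and invoke $v_l\geq l^{-\eta}$ to make the two cases yield the same bound. The execution differs in one genuine respect. You begin by discarding the indices $n\leq l$ (losing at most $\sum_{n\leq l}n\leq l^2$ of the total count), which gives $v_n\leq v_l$ for all remaining $n$ and lets you split against the single fixed threshold $v_l^{-1/\eta}$. The paper instead keeps all $n$, splits according to the $n$-dependent threshold (i.e.,\ according to which term in $|A_n|^{1/p}\wedge v_n^{-1}|A_n|^{1/2}$ is active), and then in the $\ell_2$-regime case bounds $\sum_{n\in R}v_n^{-2}|A_n|$ from below by a rearrangement: since $|A_n|\leq n$ and $v_n^{-2}$ is increasing, the minimum for a fixed total is attained by greedily filling $|A_1|=1,|A_2|=2,\ldots$, and the tail from $n\geq l$ then contributes at least $v_l^{-2}\cdot l^2/2$. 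Your up-front truncation sidesteps this rearrangement and is somewhat more direct; the paper's route avoids the extra bookkeeping of verifying the two lower bounds on $\gamma_n$ against the fixed threshold.
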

\begin{proof}
  Let $A$ be a subset of $\{ (n,j):\, n\in\bn,\ 1\leq j\leq n\}$ with
  $\abs{A}\geq k$. For each $n\in\bn$ set $A_n=A\cap \{ (n,j):\, 1\leq
  j\leq n\}$. By Proposition~\ref{prop:properties-of-Fn}(iii) we can
  write $\bn$ as the union of disjoint sets $L$ and $R$, where
  $L=\big\{n\in \bn:\, \lambda_{F_n}(\abs{A_n})\geq
  \frac1{K_p}\abs{A_n}^{\frac1{p}} \big\}$ and $R=\big\{ n\in\bn:\,
  \lambda_{F_n}(\abs{A_n})\geq\frac1{K_p}\frac1{v_n} \abs{A_n}^{\frac12}
  \big\}\setminus L$. Then
  \[
  \biggnorm{\sum_{(n,j)\in A} f^{(n)}_j}_{Y_{p,\vv}} \geq\bigg( \sum_n
  \lambda_{F_n}(\abs{A_n}) ^p \bigg)^{\frac1{p}} \geq
  \frac1{K_p}\cdot\bigg( \sum_{n\in L}
  \abs{A_n} + \sum_{n\in R} \Big(\tfrac1{v_n}
  \abs{A_n}^{\frac12}\Big)^p \bigg)^{\frac1{p}}\ ,
  \]
  and hence, using $p<2$, we obtain
  \begin{equation}
    \label{eq:lower-fund-fn-of-Y}
    K_p\biggnorm{\sum_{(n,j)\in A} f^{(n)}_j}_{Y_{p,\vv}} \geq \bigg(
    \sum_{n\in L} \abs{A_n} + \Big(\sum_{n\in R} \tfrac1{v_n^2}
    \abs{A_n}\Big)^{\frac{p}2} \bigg)^{\frac1{p}}\ .
  \end{equation}
  Set $l=\Bigintp{\sqrt{\frac{k}2}}$. Since
  $\sum_n\abs{A_n}=\abs{A}\geq k$, either $\sum_{n\in L} \abs{A_n}\geq
  l^2$ or $\sum_{n\in R} \abs{A_n}\geq l^2$. In the former case,
  inequality~\eqref{eq:lower-fund-fn-of-Y} immediately gives
  \begin{equation}
    \label{eq:lower-bound-one}
    K_p\biggnorm{\sum_{(n,j)\in A} f^{(n)}_j}_{Y_{p,\vv}} \geq
    l^{\frac2{p}} \geq \frac1{v_{l}}\cdot l\ ,
  \end{equation}
  where the second inequality follows from the assumption that
  $v_n\geq n^{-\eta}$ for all $n\in\bn$. 

  We now consider the case when $\sum_{n\in R} \abs{A_n}\geq
  l^2$. Choose $s\in\bn$ and $0\leq s'\leq s$ such that
  \[
  \sum_{n\in R}\abs{A_n} =\sum_{n=1}^{s-1}n + s'\ .
  \]
  Since $(v_n)$ is decreasing, we have
  \[
  \sum_{n\in R} \tfrac1{v_n^2} \abs{A_n}\geq \sum_{n=1}^{s-1}
  \tfrac1{v_n^2} n + \tfrac1{v_s^2}s' \geq \frac1{v_l^2} \left(
  \sum_{n=l}^{s-1} n + s' \right) \geq \frac1{v_l^2}
  \cdot \frac{l^2}{2}\ ,
  \] 
  where we used $\ds\sum_{n=1}^{l-1} n\leq \frac{l^2}{2}$. Hence
  by~\eqref{eq:lower-fund-fn-of-Y} we obtain
  \begin{equation}
    \label{eq:lower-bound-two}
    K_p\biggnorm{\sum_{(n,j)\in A} f^{(n)}_j}_{Y_{p,\vv}} \geq
    \frac1{\sqrt{2}}\cdot \frac1{v_l}\cdot l\ .
  \end{equation}
  The claim now follows from~\eqref{eq:lower-bound-one}
  and~\eqref{eq:lower-bound-two} above.
\end{proof}

\section{The key lemma}
\label{sec:key-lem}

This section is entirely devoted to a result that will play a central
r\^ole in distinguishing closed ideals. It roughly says that if one
has a bounded operator and the fundamental function of the domain is
asymptotically smaller than the lower fundamental function of the
range space, then a large proportion of basis vectors must map to
`flat' vectors.

\begin{lem}
  \label{lem:key}
  Let $Y$ be an infinite-dimensional Banach space with a normalized,
  1-unconditional basis $(f_j)$. For each $m\in\bn$ let $G_m$ be an
  $m$-dimensional Banach space with a normalized, 1-unconditional
  basis $\big\{ g^{(m)}_i:\,1\leq i\leq m\big\}$. Assume that
  \begin{align}
    \label{eq:varf-sublinear}
    \lim_{k\to\infty}\, \sup_{m\geq k}
    \frac{\varf_{G_m}(k)}{k}&=0\ ,\quad \text{and} \\[2ex]
    \label{eq:lower-vs-upper}
    \lim_{m\to \infty} \frac{\varf_{G_m}(m)}{\lambda_Y(cm)} &= 0\qquad
    \text{for all }c>0\ .
  \end{align}
  If $B_m\colon G_m\to Y$ is a sequence of operators with
  $\sup_m\norm{B_m}\leq 1$, then
  \[
  \frac1{m}\sum_{i=1}^m \bignorm{B_m\big(g^{(m)}_i\big)}_{\infty} \to
  0 \quad\text{as} \quad m\to\infty\ .
  \]
  Here $\norm{y}_\infty=\sup_j \abs{y_j}$ for $y=\sum_j y_j f_j\in
  Y$.
\end{lem}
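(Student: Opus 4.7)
The plan is to argue by contradiction: suppose the conclusion fails, so after passing to a subsequence we may assume there exists $\delta>0$ with $\frac{1}{m}\sum_{i=1}^m \bignorm{B_m g^{(m)}_i}_\infty\geq\delta$ for every $m$. Expanding $B_m g^{(m)}_i=\sum_j y^{(m)}_{i,j}f_j$, choose $j(i)\in\bn$ realising $\bignorm{B_m g^{(m)}_i}_\infty$; a Markov argument then supplies a set $I=I^{(m)}\subset\{1,\dots,m\}$ with $|I|\geq\delta m/2$ and $\bigabs{y^{(m)}_{i,j(i)}}\geq\delta/2$ for every $i\in I$. Let $K=\max_j\bigabs{\{i\in I:j(i)=j\}}$ and, using~\eqref{eq:varf-sublinear}, fix once and for all a \emph{constant} $K_0=K_0(\delta)$ such that $\varf_{G_m}(k)/k<\delta/2$ for all $m\geq k\geq K_0$.

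\emph{Concentration case} $K\geq K_0$. Pick a fibre $I'=\{i\in I:j(i)=j_0\}$ of size $K$ and set $x=\sum_{i\in I'}\sgn(y^{(m)}_{i,j_0})g^{(m)}_i$. By 1-unconditionality in $G_m$ we have $\norm{x}\leq\varf_{G_m}(K)$, while the $j_0$-th coordinate of $B_m x$ equals $\sum_{i\in I'}\abs{y^{(m)}_{i,j_0}}\geq K\delta/2$. This forces $\varf_{G_m}(K)/K\geq\delta/2$, contradicting the choice of $K_0$ (since $K_0\leq K\leq m$).

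\emph{Spread case} $K<K_0$. A greedy selection produces $I''\subset I$ on which $j(\cdot)$ is injective, with $|I''|\geq|I|/K\geq cm$, where $c=\delta/(2K_0)$ is \emph{independent of $m$}. Introduce independent Rademacher signs $(\epsilon_i)_{i\in I''}$ and set $u=\sum_{i\in I''}\epsilon_i g^{(m)}_i$, so $\norm{u}\leq\varf_{G_m}(|I''|)\leq\varf_{G_m}(m)$. For $j_0=j(i_0)$, the $j_0$-th coordinate of $B_m u$ is the Rademacher sum $c_{j_0}(\epsilon)=\sum_i\epsilon_i y^{(m)}_{i,j_0}$, whose second moment is at least $|y^{(m)}_{i_0,j_0}|^2\geq(\delta/2)^2$. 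The standard fourth-moment bound $\be[c_{j_0}^4]\leq 3(\be[c_{j_0}^2])^2$ combined with Paley--Zygmund yields $\bp(|c_{j_0}|\geq\delta/(2\sqrt{2}))\geq 1/12$ for each $i_0\in I''$. Averaging over the signs, some choice $\epsilon^*$ produces a set $J'\subset\{j(i):i\in I''\}$ of size $|J'|\geq|I''|/12$ on which $|c_j(\epsilon^*)|\geq\delta/(2\sqrt{2})$. Restricting $B_m u$ to $J'$ (a norm-one projection, by 1-unconditionality of $(f_j)$) gives $\norm{B_m u}\geq\frac{\delta}{2\sqrt{2}}\lambda_Y(|J'|)\geq\frac{\delta}{2\sqrt{2}}\lambda_Y(cm/12)$, which together with $\norm{B_m u}\leq\norm{u}\leq\varf_{G_m}(m)$ contradicts~\eqref{eq:lower-vs-upper} applied with the constant $c/12>0$.

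The main obstacle is ensuring $|I''|$ is \emph{linear} in $m$, since~\eqref{eq:lower-vs-upper} controls $\varf_{G_m}(m)$ only against $\lambda_Y$ evaluated at a constant multiple of $m$; this is precisely why the dichotomy threshold $K_0$ must be a fixed constant depending only on $\delta$, rather than something growing with $m$. The uniformity in $m\geq k$ built into~\eqref{eq:varf-sublinear} is exactly what allows such a choice of $K_0$ to exist.
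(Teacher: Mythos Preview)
Your proof is correct and shares the paper's overall architecture: isolate a large set of indices whose images have one large coordinate, use~\eqref{eq:varf-sublinear} to control how many indices can share the same target coordinate, and then run a Rademacher averaging argument against~\eqref{eq:lower-vs-upper}. The differences are in technique rather than in idea. Where you run a hard dichotomy on the maximum fibre size $K$ versus a fixed threshold $K_0$, the paper bounds all fibres at once by applying the concave envelope $\psi$ of $\varf_{G_m}$ (Proposition~\ref{prop:fundamental function}) and Jensen's inequality to obtain directly $|A|\leq k_0|\tilde A|$, where $\tilde A$ is the set of distinct selected coordinates. For the Rademacher step, you appeal to the fourth-moment bound and Paley--Zygmund to find a specific sign choice realising many large coordinates at once; the paper instead uses the elementary inequality $\be\bigl|\sum_i r_i y_i\bigr|\geq |y_{i_0}|$ (obtained by multiplying through by $r_{i_0}$ and applying Jensen) for every $j\in\tilde A$, and then pushes the expectation inside the $Y$-norm via Jensen and 1-unconditionality, yielding $\varf_{G_m}(|A|)\geq\varrho\,\lambda_Y(|\tilde A|)$ without ever fixing signs or passing to an injective subset. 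Your argument is more explicit and probabilistic; the paper's is shorter and purely analytic, at the cost of using the concave-envelope estimate.
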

\begin{rem}
  Before proving our lemma let us look at the extreme case. Assume
  that for each $m\in\bn$ we are given a linear operator $B_m$ from
  $\ell_\infty^m$ to $\ell_1$ with $\norm{B_m}\leq 1$. In that special
  case we can easily deduce our claim from Grothendieck's
  inequality. Indeed, fixing $m\in\bn$, we can write $B_m$ as a matrix
  $B_m=\big(B_m(j,i)\big)$, $i=1,\dots,m,\ j\in\bn$, with
  \[
  \sup \bigg\{\sum_{i=1}^m \sum_{j=1}^\infty t_j B_m(j,i) s_i :
  \abs{s_i}, \abs{t_j}\leq 1\text{ for }1\leq i\leq m,\ 1\leq j<\infty
  \bigg\}=\norm{B_m}\leq 1\ ,
  \]
  and we then have to show that  
  \[
  \frac1m \sum_{i=1}^m \bignorm{B_m\big(e_{\infty,i}^{(m)}\big)}_\infty
  =\frac1m \sum_{i=1}^m \max_{j\in\bn} \bigabs{B_m(j,i)}\to 0\qquad
  \text{as }m\to\infty\ .
  \]
  Now Grothendieck's inequality implies that 
  \[
  \sum_{i=1}^m \sum_{j=1}^\infty  B_m(j,i) \ip{y_j}{x_i}\leq K_G\ ,
  \]
  whenever $x_i$, $i=1,\dots,m$, and $y_j$, $j\in\bn$, are elements of
  the unit ball of a Hilbert space $H$, and where $K_G$ denotes the
  Grothendieck constant. We choose for each $i=1,\dots m$ an integer
  $j_i\in\bn$ such that $\abs{B_m(j_i,i)}=\max_{j\in\bn}
  \abs{B_m(j,i)}$. We then let $H=\ell_2^m$ and $x_i= e^{(m)}_{2,i}$
  for $i=1,\dots m$. For each $j\in\bn$ we define a vector
  $\yt_j=\sum_{i=1}^m \yt_j(i) e^{(m)}_{2,i}$ in $\ell^m_2$ as
  follows.
  \[
  \yt_j(i)=\begin{cases}%
  \sgn\big(B_m(j_i,i)\big) & \text{ if $j=j_i$,}\\ 
  0 & \text{ otherwise.}
  \end{cases}
  \]
  Note that  $\norm{\yt_j}_{\ell_2^m}\leq \sqrt{m}$, and so
  $y_j=\yt_j/\sqrt{m}\in B_{\ell_2^m}$ for each $j\in\bn$. It follows
  that
  \begin{align*}
    K_G &\geq \sum_{i=1}^m\sum_{j=1}^\infty  B_m(j,i) \ip{y_j}{x_i}=
    \sum_{i=1}^m \sum_{j=1}^\infty B_m(j,i) y_j(i)\\[2ex]
    &=\frac1{\sqrt{m}}
    \sum_{i=1}^m \bigabs{B_m(j_i,i)}= \frac1{\sqrt{m}} \sum_{i=1}^m
    \max_{j\in\bn} \bigabs{B_m(j,i)}\ ,
  \end{align*}
  which yields our claim in this special case.
\end{rem}
\begin{proof}[Proof of Lemma~\ref{lem:key}]
  Fix $\varrho>0$. By~\eqref{eq:varf-sublinear} there exists
  $k_0\in\bn$ such that
  \[
  \frac{\varf_{G_m}(k)}{k} < \frac{\varrho}{2}\qquad \text{for all
  }m\geq k\geq k_0\ .
  \]
  Set $c=\varrho k_0^{-1}$. By~\eqref{eq:lower-vs-upper} we may choose
  $m_0\in\bn$ such that
  \[
  m_0>k_0\qquad\text{and}\qquad \frac{\varf_{G_m}(m)}{\lambda_Y(cm)} <
  \varrho\quad \text{for all }m>m_0\ .
  \]
  Now fix $m>m_0$ and set
  \[
  A=\Big\{ i\in\{ 1,2,\dots, m\}:\,
  \bignorm{B_m\big(g^{(m)}_i\big)}_\infty \geq\varrho \Big\}\ .
  \]
  We will show that $\abs{A}\leq\varrho m$. It will then follow that
  \[
  \frac1{m}\sum_{i=1}^m \bignorm{B_m\big(g^{(m)}_i\big)}_{\infty} \leq
  \varrho + \frac{\abs{A}}{m} \leq 2\varrho\ ,
  \]
  and since $m>m_0$ and $\varrho>0$ were arbitrary, the proof of the
  lemma will be
  complete. To show that $\abs{A}\leq\varrho m$ we argue by
  contradiction, and assume that $\abs{A}\geq \varrho m$. For each
  $i\in A$ fix $j_i\in\bn$ such that
  \[
  \biggabs{\Big[B_m\big(g^{(m)}_i\big)\Big]_{j_i}}\geq \varrho\ .
  \]
  Here $[y]_j$ denotes, for $y\in Y$, the $j^{\text{th}}$ coordinate
  of $y$ with respect to the basis $(f_j)_{j=1}^\infty$. We
  then set $\At = \{ j_i:\, i\in A \}$, and for $j\in\At$ we let
  $A_j=\{ i\in A:\, j_i=j\}$. We shall now obtain a number of
  inequalities that will eventually lead to a contradiction.

  Fix $j\in\At$ and for each $i=1,\dots,m$ let $\vare_i$ be the
  sign of $\big[B_m\big(g^{(m)}_i\big)\big]_j$. Since $\norm{B_m}\leq
  1$ and $\big(g^{(m)}_i\big)$ is 1-unconditional, we have
  \begin{align*}
    \varf_{G_m}(\abs{A_j}) & \geq \biggnorm{\sum_{i\in A_j} \vare_i
      g^{(m)}_i}_{G_m} \geq \biggnorm{\sum_{i\in A_j} \vare_i
      B_m\big(g^{(m)}_i\big)}_Y\\[2ex]
    &\geq \bigg[\sum_{i\in A_j} \vare_i B_m\big(g^{(m)}_i\big)\bigg]_j
    \geq \abs{A_j}\varrho\ .
  \end{align*}
  Let $\psi$ be the concave envelope of $\varf_{G_m}$. Since $A$ is
  the disjoint union of the sets $A_j$, $j\in\At$, we obtain
  \begin{eqnarray*}
    \abs{A} &=& \sum_{j\in\At} \abs{A_j} \leq \varrho^{-1}
    \sum_{j\in\At} \varf_{G_m}(\abs{A_j}) \leq \varrho^{-1}
    \sum_{j\in\At} \psi(\abs{A_j}) \\[2ex]
    &\leq& \varrho^{-1} \abs{\At}\cdot \psi\Big(
    \tfrac{\abs{A}}{\abs{\At}} \Big) \leq 2\varrho^{-1} \abs{\At}\cdot
    \varf_{G_m}\Big( \tfrac{\abs{A}}{\abs{\At}} \Big)\ ,
  \end{eqnarray*}
  where the first inequality of the second line uses the concavity of
  $\psi$, and the second inequality uses
  Proposition~\ref{prop:fundamental function}. Now if
  $\frac{\abs{A}}{\abs{\At}}>k_0$, then it follows from the above that
  $\frac{\varf_{G_m}(k_0)}{k_0}\geq \frac{\varrho}2$ which contradicts
  the choice of $k_0$. Thus
  \begin{equation}
    \label{eq:key-lem:A-and-At}
    \abs{A}\leq k_0\abs{\At}\ .
  \end{equation}
  We next fix independent Rademacher random variables $(r_i)_{i\in
    A}$, and show the estimate
  \begin{equation}
    \label{eq:lem:ave-of-coords}
    \be \biggabs{\sum_{i\in A} r_i \Big[ B_m\big(g^{(m)}_i\big) \Big]_j}
    \geq \varrho \qquad \text{for all }j\in \At\ .
  \end{equation}
  To see this fix $j\in\At$ and set $y_i=\big[ B_m\big(g^{(m)}_i\big)
    \big]_j$ for $i\in A$. By definition of $\At$ there is an
  $i_0\in A$ such that $j_{i_0}=j$, and hence
  $\abs{y_{i_0}}\geq\varrho$. Thus
  \begin{eqnarray*}
    \be \biggabs{\sum_{i\in A} r_i y_i} &=& \be \biggabs{\sum_{i\in A}
      r_{i_0} r_i y_i} = \be \biggabs{y_{i_0} + \sum _{i\in A, i\neq
        i_0} r_{i_0}r_i y_i} \\[2ex]
    & \geq & \biggabs{y_{i_0}  + \sum _{i\in A, i\neq i_0} \be
      (r_{i_0}r_i) y_i} = \abs{y_{i_0}}\geq \varrho\ ,
  \end{eqnarray*}
  using Jensen's inequality at the start of the second line. We next
  obtain
  \begin{eqnarray*}
    \varf_{G_m} (\abs{A}) &\geq & \be \biggnorm{\sum_{i\in A} r_i
      B_m\big(g^{(m)}_i\big) }_Y \qquad \text{as $\norm{B_m}\leq
      1$,}\\[2ex]
    & = & \be \biggnorm{\sum_j \biggabs{\sum_{i\in A} r_i \Big[
          B_m\big(g^{(m)}_i\big) \Big]_j} f_j}_Y \quad\text{as $(f_j)$
      is 1-unconditional,}\\[2ex]
    &\geq & \biggnorm{\sum_j \be \biggabs{\sum_{i\in A} r_i \Big[
          B_m\big(g^{(m)}_i\big) \Big]_j} f_j}_Y \qquad \text{by
      Jensen's inequality,}\\[2ex]
    &\geq & \varrho \biggnorm{\sum_{j\in\At}f_j}_Y
    \quad\text{using \eqref{eq:lem:ave-of-coords} and
      1-unconditionality of $(f_j)$ ,}\\[2ex]
    &\geq & \varrho \lambda_Y(\abs{\At})\ .
  \end{eqnarray*}
  Recall that $c=\varrho k_0^{-1}$ and $A\subset \{1,\dots, m\}$ with
  $\abs{A}\geq \varrho m$. So
  $\abs{A}\leq m$, and by~\eqref{eq:key-lem:A-and-At}, $\abs{\At}\geq
  cm$. Thus, the above gives
  \[
  \varrho\leq \frac{\varf_{G_m}(\abs{A})}{\lambda_Y(\abs{\At})} \leq
  \frac{\varf_{G_m}(m)}{\lambda_Y(cm)} <\varrho
  \]
  by the choice of $m_0$. This contradiction completes the proof.
\end{proof}

\section{Proof of the main results}
\label{sec:main-results}

In the previous section we described a situation when images of basis
vectors are on average `flat'. Here we begin with a calculation
(Lemma~\ref{lem:F_n-to-ell_2^n} below) that shows that certain formal
inclusion maps reduce the norm of `flat'
vectors. We then introduce, in the special case $1<p<2$ and
$p<q<\infty$,  a class of closed ideals in $\cL(\ell_p,\ell_q)$
parametrised by decreasing sequences in $(0,1]$.
Theorem~\ref{thm:main} and Corollary~\ref{cor:distinct-ideals} show
when these ideals are distinct from one another. This will lead to a
proof of our main result, Theorem~\ref{mainthm:infinite}. In the rest
of the section we follow a similar strategy and establish
Theorem~\ref{mainthm:refinement}.
\begin{lem}
  \label{lem:F_n-to-ell_2^n}
  Given $1<p<2$ and $p<q<\infty$, let $n\in\bn$, $v\in(0,1]$, and
  $F=F^{(n)}_{p,v}$ with basis $\big\{ f^{(n)}_j:\,1\leq
    j\leq n\big\}$. Let $y=\sum_{j=1}^n y_jf^{(n)}_j\in F$ with
  $\norm{y}_F\leq 1$, and let $\yt=\sum_{j=1}^n
  y_je^{(n)}_{2,j}\in\ell_2^n$. If
  $\norm{y}_\infty=\max_j\abs{y_j}\leq\sigma\leq 1$ and $v\leq
  \sigma^{\frac12-\frac1{p'}}$, then
  \[
  \norm{\yt}_{\ell_2^n}^q \leq \max\{C_p^p,K_p^q\}\cdot \sigma^r\cdot
  \norm{y}_F^p\ ,
  \]
  where $C_p$ is the cotype-2 constant of $\ell_p$ and $r=\min\big\{
  \frac{q}2-\frac{p}2\ ,\ \frac{q}2-\frac{q}{p'}\big\}$.
\end{lem}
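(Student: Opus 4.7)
The plan is to combine the dual description of $F=F^{(n)}_{p,v}$ in \eqref{eq:xp-isom-p<2-n} with the cotype-$2$ property of $L_p$, and conclude by a case analysis. By 1-unconditionality of $(f^{(n)}_j)$ I may assume $y_j\ge 0$; set $T=\norm{\tilde y}_{\ell_2^n}^2=\sum y_j^2$ and $\eta=1/p-1/2$, so the hypothesis becomes $v\le\sigma^\eta$ and $r=\min\{(q-p)/2,q\eta\}$. The unit ball of $E^{(n)}_{p',v}$ is $\{a:\norm{a}_{p'}\vee v\norm{a}_2\le 1\}$; testing its dual norm at $y$ with $a=y/M$, where $M=\max\{\norm{y}_{p'},v\norm{y}_2\}$, and exploiting $y_j\le\sigma$ with $p'>2$ via $\sum y_j^{p'}\le\sigma^{p'-2}T$ (hence $\norm{y}_{p'}\le\sigma^{(2-p)/p}T^{(p-1)/p}$), \eqref{eq:xp-isom-p<2-n} yields
\[
K_p\norm{y}_F\ge\min\bigl\{T^{1/p}/\sigma^{(2-p)/p},\,T^{1/2}/v\bigr\}.
\]
Therefore at least one of (i) $T\le K_p^p\sigma^{2-p}\norm{y}_F^p$ or (ii) $T\le K_p^2 v^2\norm{y}_F^2$ must hold. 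Separately, cotype $2$ of $L_p$ with constant $C_p$, applied to the vectors $y_jf^{(n)}_j\in L_p$ (using $\norm{f^{(n)}_j}_{L_p}=1$), gives the universally valid $T\le C_p^2\norm{y}_F^2$.

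Throughout the case analysis I would use $v\le\sigma^\eta$, $\norm{y}_F\le 1$, $r\le q\eta$ and $r\le q-p$. In \emph{case (ii)}, raising to the $q/2$-th power gives $T^{q/2}\le K_p^q v^q\norm{y}_F^q\le K_p^q\sigma^{q\eta}\norm{y}_F^p\le K_p^q\sigma^r\norm{y}_F^p$. In \emph{case (i) with $q\ge 2$}, $T^{q/2}\le K_p^{pq/2}\sigma^{(2-p)q/2}\norm{y}_F^{pq/2}$; since $pq/2\ge p$ I absorb $\norm{y}_F^{pq/2-p}\le 1$, the bound $pq/2\le q$ gives $K_p^{pq/2}\le K_p^q$, and $(2-p)q/2\ge r$, where the latter combines $(2-p)q/2\ge q\eta$ (as $p\ge 1$) with the observation that in the regime $r=(q-p)/2$ one has $q\le p^2/(2(p-1))\le p'$, so $(2-p)q/2\ge(q-p)/2$. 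Both of these cases give $T^{q/2}\le K_p^q\sigma^r\norm{y}_F^p$.

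The subtle case is \emph{(i) with $q<2$}, where $pq/2<p$ makes the direct approach fail. Here I would interpolate the dual and the cotype bounds with weight $\alpha=2(q-p)/(q(2-p))\in(0,1]$:
\[
T\le\bigl(K_p^p\sigma^{2-p}\norm{y}_F^p\bigr)^\alpha\bigl(C_p^2\norm{y}_F^2\bigr)^{1-\alpha}.
\]
The choice of $\alpha$ is arranged precisely so that after raising to the $q/2$-th power the $\norm{y}_F$-exponent equals exactly $p$; a direct computation shows the $\sigma$-exponent becomes $q-p\ge r$, and the prefactor is $K_p^{p(q-p)/(2-p)}C_p^{q-2(q-p)/(2-p)}$, whose two exponents sum to $p$, so it is at most $\max\{K_p,C_p\}^p\le\max\{C_p^p,K_p^q\}$. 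This yields $T^{q/2}\le\max\{C_p^p,K_p^q\}\sigma^r\norm{y}_F^p$ and closes the argument. The main obstacle is precisely this sub-case: the straightforward power-raising drops the $\norm{y}_F$-exponent below $p$, and only the exact interpolation weight $\alpha$ restores it while keeping both the $\sigma$- and $\norm{y}_F$-exponents in the required range.
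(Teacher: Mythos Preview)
Your argument is correct. Both proofs rest on the same two ingredients—the dual description~\eqref{eq:xp-isom-p<2-n}, tested against a scalar multiple of $y$ itself, and the cotype-$2$ inequality for $\ell_p$—but the case split is organised differently. The paper's dichotomy is simply whether $\norm{\tilde y}_{\ell_2^n}\leq\sqrt\sigma$ or not: in the first case one writes $\norm{\tilde y}^q=\norm{\tilde y}^{q-p}\norm{\tilde y}^p\leq\sigma^{(q-p)/2}C_p^p\norm{y}_F^p$ directly from cotype, and in the second case the duality test with $z=y/\norm{\tilde y}_{\ell_2^n}$ gives $\norm{\tilde y}_{\ell_2^n}\leq K_p\sigma^{\eta}\norm{y}_F$ in one step. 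This threshold is chosen precisely so that each branch closes uniformly for all $q>p$, with no further subdivision. Your dichotomy instead comes from which term realises $M=\max\{\norm{y}_{p'},v\norm{y}_2\}$; your case~(ii) matches the paper's second case, but your case~(i) produces the bound $T\leq K_p^p\sigma^{2-p}\norm{y}_F^p$, whose $\norm{y}_F$-exponent after raising to the $q/2$ power is $pq/2$, forcing the extra split on $q\gtrless 2$ and the interpolation with cotype when $q<2$. So the paper's route is shorter, while yours extracts a somewhat sharper intermediate inequality in case~(i) at the cost of the additional bookkeeping.
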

Here we recall that for $1\leq p\leq 2$ the Banach space $L_p[0,1]$,
and hence $\ell_p$, has cotype~2. This is a consequence of
Khintchine's inequality. See for example~\cite{lt:79}*{Definition
  1.e.12}.
\begin{proof}
  If $\norm{\yt}_{\ell_2^n}\leq \sqrt{\sigma}$, then
  \[
  \label{eq:norm-of-comp-1}
  \norm{\yt}_{\ell_2^n}^q = \norm{\yt}_{\ell_2^n}^{q-p}\cdot
  \norm{\yt}_{\ell_2^n}^p \leq \sigma^{\frac{q}2-\frac{p}2}\cdot
  \norm{\yt}_{\ell_2^n}^p \leq C_p^p \cdot\sigma^{\frac{q}2-\frac{p}2}
  \cdot \norm{y}_{F}^p\ ,
  \]
  where we use the fact, an easy consequence of the definition of
  cotype, that in $\ell_p$ a normalized, 1-unconditional basis
  $C_p$-dominates the unit vector basis of $\ell_2$. So the claim
  holds in this case.

  Now assume that $\norm{\yt}_{\ell_2^n}>\sqrt{\sigma}$. Set
  $z_j=\frac{y_j}{\norm{\yt}_{\ell_2^n}}$ for $1\leq j\leq n$, and
  let $\zt=\sum_{j=1}^n z_j e^{(n)}_{2,j}$. Then $\norm{\zt}_{\ell_2^n}=1$ and
  $\ip{\yt}{\zt}=\norm{\yt}_{\ell_2^n}$. Note also that $\abs{z_j}\leq
  \sqrt{\sigma}$ for all $j$. So we have
  \[
  \left(\sum_{j=1}^n
  \abs{z_j}^{p'}\right)^{\frac1{p'}} = \left(\sum_{j=1}^n
  \abs{z_j}^{p'-2}\cdot \abs{z_j}^2 \right)^{\frac1{p'}}
  \leq \sigma^{\frac12-\frac1{p'}} \cdot \left(\sum_{j=1}^n
  \abs{z_j}^2\right)^{\frac1{p'}}  =
  \sigma^{\frac12-\frac1{p'}}\ .
  \]
  On the other hand, we have $v\cdot\norm{\zt}_{\ell_2^n}=v\leq
  \sigma^{\frac12-\frac1{p'}}$ by assumption. It follows that
  $\norm{z}_{p',v}\leq \sigma^{\frac12-\frac1{p'}}$, where
  $z=\sum_{j=1}^n z_j e^{(n)}_j$, and $\big\{ e^{(n)}_j:\,1\leq j\leq n
  \big\}$ is the unit vector basis of
  $E=E^{(n)}_{p',v}=(\br^n,\norm{\cdot}_{p',v})$. Hence
  by~\eqref{eq:xp-isom-p<2-n} we have
  \[
  \norm{\yt}_{\ell_2^n} = \ip{\yt}{\zt} \leq
  \norm{(y_j)}^*_{p',v}\cdot \norm{(z_j)}_{p',v} \leq
  K_p\cdot\sigma^{\frac12-\frac1{p'}}  \cdot \norm{y}_{F}\ .
  \]
  It follows that $\norm{\yt}_{\ell_2^n}^q\leq K_p^q\cdot
  \sigma^{\frac{q}2-\frac{q}{p'}}  \cdot \norm{y}_{F}^q$, which proves
  the claim since $\norm{y}_F\leq 1$, and so $\norm{y}_F^q\leq
  \norm{y}_F^p$.
\end{proof}

Fix $1<p<2$ and $p<q<\infty$. Let $\vv=(v_n)$ be a decreasing sequence
in $(0,1]$. In
Section~\ref{subsec:the-space-Yv} we introduced the complemented
subspace $Y_{\vv}=Y_{p,\vv}$ of $\ell_p$ with corresponding projection
$P_{\vv}=P_{p,\vv}$. As already mentioned in the proof above, for each
$n\in\bn$, the unit vector basis of $\ell_2^n$ is 
$C_p$-dominated by the normalized, 1-unconditional basis
$\big\{ f^{(n)}_j:\, 1\leq j\leq n\big\}$ of
$F_n=F^{(n)}_{p,v_n}$. Thus the formal inclusion map
\[
I_{Y_{\vv},Z_q}\colon Y_{\vv}= \left(\bigoplus _{n=1}^\infty F_n
\right)_{\ell_p}\to Z_q=\left( \bigoplus_{n=1}^\infty \ell_2^n
\right)_{\ell_q}
\]
given by $I_{Y_{\vv},Z_q}\big(f^{(n)}_j\big) = e^{(n)}_{2,j}$ is
well-defined and bounded. This defines the closed ideal
$\cJ^{I_{Y_{\vv},Z_q}}$ of $\cL(\ell_p,\ell_q)$ generated by operators
factoring through $I_{Y_{\vv},Z_q}$.  We also fixed, in
Section~\ref{subsec:lp-spaces}, an
isomorphism $U_q\colon Z_q\to\ell_q$. Note that the operator
$T_{\vv}=U_q\circ I_{Y_{\vv},Z_q}\circ P_{\vv}$ belongs to the ideal
$\cJ^{I_{Y_{\vv},Z_q}}$. The next result establishes conditions on two
sequences $\vv$ and $\vw$ which imply that $T_{\vw}\notin
\cJ^{I_{Y_{\vv},Z_q}}$.
\begin{thm}
  \label{thm:main}
  Fix $1<p<2$ and $p<q<\infty$.
  Let $\vv=(v_n)$ and $\vw=(w_n)$ be decreasing sequences in
  $(0,1]$. Consider $Y_{\vv}$, $I_{Y_{\vv},Z_q}$ and $T_{\vw}$ as
  above. Assume that $v_n\geq n^{-\eta}$ and $w_n\geq n^{-\eta}$ for
  all $n\in\bn$, where $\eta=\frac1{p}-\frac12$. Further assume that
  \begin{equation}
    \label{eq:w-bigger-than-v}
    \frac{v_{\sqrt{cn}}}{w_n}\to 0\quad\text{as}\quad n\to\infty\quad
    \text{for all }c\in (0,1)
  \end{equation}
  (where we simplify notation by letting $v_x=v_{\intp{x}}$ for
  $x\in\br$). Then $T_{\vw}\notin \cJ^{I_{Y_{\vv},Z_q}}$.
\end{thm}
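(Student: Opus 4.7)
The plan is to argue by contradiction. Suppose $T_{\vw}\in\cJ^{I_{Y_{\vv},Z_q}}$; then for any $\vare>0$ there are operators $A_k\in\cL(Z_q,\ell_q)$ and $B_k\in\cL(\ell_p,Y_{\vv})$ ($k=1,\dots,N$) with $\bignorm{T_{\vw}-S}<\vare$, where $S:=\sum_{k=1}^N A_k I_{Y_{\vv},Z_q} B_k$. Set $M:=\max_k\{\norm{A_k},\norm{B_k}\}$ and $c_0:=\norm{U_q^{-1}}^{-1}$, so that $\bignorm{U_q e^{(m)}_{2,j}}_{\ell_q}\geq c_0$ for all $m,j$. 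The goal is to prove that, for every $\delta>0$, one can find $m\in\bn$ and $j\in\{1,\dots,m\}$ with $\bignorm{S g^{(m)}_j}_{\ell_q}<\delta$, where $g^{(m)}_j:=f^{(m)}_{p,w_m,j}$, viewed as an element of $F^{(m)}_{p,w_m}\subset Y_{\vw}\subset\ell_p$. Since $\bignorm{T_{\vw} g^{(m)}_j}_{\ell_q}=\bignorm{U_q e^{(m)}_{2,j}}_{\ell_q}\geq c_0$ and $\bignorm{g^{(m)}_j}_{\ell_p}=1$, this would force $\norm{T_{\vw}-S}\geq c_0-\delta$; taking $\delta:=c_0/2$ and $\vare<c_0/4$ yields the desired contradiction.

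To find such a pair $(m,j)$, first apply Lemma~\ref{lem:key} with $Y:=Y_{\vv}$ and $G_m:=F^{(m)}_{p,w_m}$, composing each $B_k$ with the isometric inclusion $F^{(m)}_{p,w_m}\hookrightarrow\ell_p$. Proposition~\ref{prop:properties-of-Fn}(iii) gives $\varf_{G_m}(k)\leq k^{1/p}\meet k^{1/2}/w_m$; hypothesis~\eqref{eq:varf-sublinear} follows from $\varf_{G_m}(k)/k\leq k^{1/p-1}\to 0$, while $\varf_{G_m}(m)\leq m^{1/2}/w_m$ (valid since $w_m\geq m^{-\eta}$) combined with the bound $\lambda_{Y_{\vv}}(cm)\gtrsim\sqrt{cm/2}/v_{\lfloor\sqrt{cm/2}\rfloor}$ from Lemma~\ref{lem:lower-fund-fn-of-Yv} yields a ratio of order $v_{\sqrt{cm/2}}/w_m$, which tends to $0$ by~\eqref{eq:w-bigger-than-v}. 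Applying Lemma~\ref{lem:key} to each $B_k/M$ and summing gives $m^{-1}\sum_{j=1}^m\max_k\bignorm{B_k g^{(m)}_j}_{\infty,Y_{\vv}}\to 0$; hence, for every $\sigma>0$ and all sufficiently large $m$, at least half of the indices $j\in\{1,\dots,m\}$ satisfy $\max_k\bignorm{B_k g^{(m)}_j}_{\infty,Y_{\vv}}\leq\sigma$.

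The main obstacle is converting flatness of $z:=B_k g^{(m)}_j$ in $Y_{\vv}$ into smallness of $\bignorm{I_{Y_{\vv},Z_q} z}_{Z_q}$. Writing $z=(z_n)_n\in\bigoplus_n F^{(n)}_{p,v_n}$, we split $\bignorm{I_{Y_{\vv},Z_q} z}_{Z_q}^q=\sum_n\norm{z_n}_{\ell_2^n}^q$ at $K=K(\sigma):=\max\{n: v_n>(\sigma/M)^\eta\}$ (finite for each $\sigma>0$ since $v_n\to 0$---a consequence of~\eqref{eq:w-bigger-than-v} and $w_n\leq 1$---and tending to $\infty$ as $\sigma\to 0$). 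For $n>K$, Lemma~\ref{lem:F_n-to-ell_2^n} applied to $z_n/M$ gives $\norm{z_n}_{\ell_2^n}^q\lesssim\sigma^r\norm{z_n}_{F_n}^p$, with total contribution $O(\sigma^r)$. For $n\leq K$, the cotype-$2$ estimate $\norm{z_n}_{\ell_2^n}\leq C_p\norm{z_n}_{F_n}$ combined with the embedding $\ell_p\hookrightarrow\ell_q$ (since $q>p$) produces $\sum_{n\leq K}\norm{z_n}_{\ell_2^n}^q\leq C_p^q\bignorm{P^{\leq K}z}_{Y_{\vv}}^q$, where $P^{\leq K}$ is the contractive projection of $Y_{\vv}$ onto its first $K$ blocks. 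The quantity $\bignorm{P^{\leq K}z}_{Y_{\vv}}$ is not controlled by $\norm{z}_\infty$ alone; the resolution is that $(g^{(m)}_j)_{(m,j)}$, being a normalized basic sequence in the reflexive space $\ell_p$ (a basis of $Y_{\vw}$), is weakly null, while each $P^{\leq K}B_k$ is finite-rank, hence compact. Therefore, for every fixed $K$ and $\sigma$, the exceptional set $\{(m,j):\max_k\bignorm{P^{\leq K}B_k g^{(m)}_j}_{Y_{\vv}}>\sigma\}$ is finite.

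Putting the pieces together: given $\delta>0$, first choose $\sigma$ so small that the two block estimates above produce $\bignorm{I_{Y_{\vv},Z_q} z}_{Z_q}<\delta/(NM)$ whenever $\norm{z}_{Y_{\vv}}\leq M$, $\norm{z}_\infty\leq\sigma$, and $\bignorm{P^{\leq K(\sigma)}z}_{Y_{\vv}}\leq\sigma$. This fixes $K:=K(\sigma)$; now choose $m$ large enough that the key-lemma conclusion supplies at least one $j\in\{1,\dots,m\}$ with $\max_k\bignorm{B_k g^{(m)}_j}_{\infty,Y_{\vv}}\leq\sigma$, and simultaneously large enough to exhaust the finite exceptional set from the compactness observation, so that every $j\in\{1,\dots,m\}$ satisfies $\max_k\bignorm{P^{\leq K}B_k g^{(m)}_j}_{Y_{\vv}}\leq\sigma$. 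For such $(m,j)$, $\bignorm{S g^{(m)}_j}_{\ell_q}\leq\sum_k\norm{A_k}\cdot\bignorm{I_{Y_{\vv},Z_q} B_k g^{(m)}_j}_{Z_q}<\delta$, which closes the contradiction.
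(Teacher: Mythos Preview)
Your argument is correct and shares the essential architecture with the paper's proof: both verify the hypotheses of Lemma~\ref{lem:key} in exactly the same way (via Proposition~\ref{prop:properties-of-Fn}(iii) and Lemma~\ref{lem:lower-fund-fn-of-Yv}), and both invoke Lemma~\ref{lem:F_n-to-ell_2^n} on the tail blocks $n>K$ where $v_n$ is small. Two organisational differences are worth noting.

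First, the paper packages the argument as a separating functional: it sets $\Phi_m(V)=m^{-1}\sum_i\bigl\langle e^{(m)}_{2,i},U_q^{-1}V(g^{(m)}_i)\bigr\rangle$, passes to a weak$^*$-accumulation point $\Phi$, and then, by linearity of $\Phi$, only has to show $\Phi_m(AI_{Y,Z}B)\to 0$ for a \emph{single} factorization $A I_{Y,Z} B$. Your contradiction set-up forces you to carry a finite sum $\sum_k A_k I_{Y,Z} B_k$ through the whole argument; this is harmless but slightly heavier.

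Second, and more substantively, your handling of the initial blocks $n\leq K$ takes a detour. You first pass through the cotype bound $\|z_n\|_{\ell_2^n}\leq C_p\|z_n\|_{F_n}$, observe that $\|P^{\leq K}z\|_{Y_{\vv}}$ is not controlled by $\|z\|_\infty$, and then repair this by appealing to weak nullness of $(g^{(m)}_j)$ in $\ell_p$ together with compactness of the finite-rank maps $P^{\leq K}B_k$. The paper instead uses the elementary coordinate bound $\|z_n\|_{\ell_2^n}\leq\sqrt{n}\,\|z\|_\infty$, which gives
\[
\Big(\sum_{n\leq K}\|z_n\|_{\ell_2^n}^q\Big)^{1/q}\leq N\cdot\|z\|_\infty,\qquad N=\Big(\sum_{n\leq K}n^{q/2}\Big)^{1/q},
\]
so that the small-block contribution is already killed by the conclusion of Lemma~\ref{lem:key}. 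Your weak-nullness/compactness step is valid, but unnecessary: the sup-norm control coming out of the key lemma suffices for \emph{both} the large and the small blocks, and the paper's route avoids having to locate a single good index $j$ by working with averages throughout.
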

\begin{proof}
  For each $n\in\bn$ let $F_n=F^{(n)}_{p,v_n}$ and
  $G_n=F^{(n)}_{p,w_n}$ with unit vector bases $\big\{
  f^{(n)}_j:\, 1\leq j\leq n\big\}$ and $\big\{ g^{(n)}_j:\, 1\leq
  j\leq n\big\}$, respectively. Thus $f^{(n)}_j=f^{(n)}_{p,v_n,j}$ and
  $g^{(n)}_j=f^{(n)}_{p,w_n,j}$ for $1\leq j\leq n$ using the notation
  introduced in Section~\ref{subsec:rosenthal-xp}.
  To simplify notation we write
  $Y=Y_{\vv}$, $Z=Z_q$, $U=U_q$ and $T=T_{\vw}$. Thus $U\colon
  Z\to\ell_q$ is an isomorphism,
  \[
  I_{Y,Z}\colon Y= \left(\bigoplus _{n=1}^\infty F_n
  \right)_{\ell_p}\to Z=\left( \bigoplus_{n=1}^\infty \ell_2^n
  \right)_{\ell_q}
  \]
  is given by $I_{Y,Z}\big(f^{(n)}_j\big) = e^{(n)}_{2,j}$, and
  $T\colon \ell_p=\big(\bigoplus _{m=1}^\infty \ell_p^{k_m}\big)_{\ell_p}
  \to \ell_q$ is the composite $T=U\circ I_{Y_{\vw},Z}\circ
  P_{\vw}$. Note that $T\big(g^{(m)}_i\big)=U \big( e^{(m)}_{2,i}\big)$.

  We need to show that $T\notin \cJ^{I_{Y,Z}}$. We achieve this by
  finding a separating functional $\Phi\in\cL(\ell_p,\ell_q)^*$ as
  follows. For each $m\in\bn$ we define $\Phi_m\in
  \cL(\ell_p,\ell_q)^*$ by setting
  \[
  \Phi_m(V)= \frac1{m} \sum_{i=1}^m
  \bigip{e^{(m)}_{2,i}}{U^{-1}V(g^{(m)}_i)}\ ,\qquad
  V\in\cL(\ell_p,\ell_q)\ .
  \]
  Since $\norm{\Phi_m}\leq \norm{U^{-1}}$ for all $m$, the sequence
  $(\Phi_m)$ has a $\omega^*$-accumulation point $\Phi$ in
  $\cL(\ell_p,\ell_q)^*$. Note that $\Phi_m(T)=1$ for all $m$, and
  hence $\Phi(T)=1$. The proof will be complete if we can show that
  $\cJ^{I_{Y,Z}}$ is contained in the kernel of $\Phi$. To see this,
  fix $A\in \cL(Z,\ell_q)$ and $B\in\cL(\ell_p,Y)$ with $\norm{A}\leq
  1$ and $\norm{B}\leq 1$. It is sufficient to show that
  \begin{equation}
    \label{eq:thm-main:aim}
    \Phi_m(AI_{Y,Z}B)\to 0\qquad\text{as}\qquad m\to\infty\ .
  \end{equation}
  Let $B_m\colon G_m\to Y$ denote the restriction to $G_m$ of $B$. We shall
  use Lemma~\ref{lem:key} to show that
  \begin{equation}
    \label{eq:thm-main:aim2}
    \frac1{m}\sum_{i=1}^m \bignorm{B_m\big(g^{(m)}_i\big)}_{\infty}
    \to 0 \quad\text{as} \quad m\to\infty\ .
  \end{equation}
  Recall that for $y=\sum_{n=1}^\infty \sum_{j=1}^n y_{n,j}f^{(n)}_j$
  in $Y$ we let $\norm{y}_\infty=\sup_{n\in\bn,\ 1\leq j\leq n} \abs{y_{n,j}}$.
  By Proposition~\ref{prop:properties-of-Fn}(iii) we have
  $\varf_{G_m}(k)\leq k^{\frac1{p}}$ for all $1\leq k \leq m$, and so
  condition~\eqref{eq:varf-sublinear} of Lemma~\ref{lem:key}
  certainly holds. Now by Lemma~\ref{lem:lower-fund-fn-of-Yv} we
  have
  \[
  \lambda_{Y} (k)\geq \frac1{3K_p} \cdot \frac1{v_{\sqrt{k/2}}} \cdot
  \sqrt{k}
  \]
  for all large $k$. On the other hand, by
  Proposition~\ref{prop:properties-of-Fn}(iii) we
  have $\varf_{G_m}(m)\leq\frac{1}{w_m} m^{\frac12}$ for all $m$. So for
  any $c>0$ and for all large $m\in\bn$, it follows that
  \[
  \frac{\varf_{G_m}(m)}{\lambda_Y(cm)} \leq C\cdot
  \frac{v_{\sqrt{c'm}}}{w_m}\ ,
  \]
  where $C$ and $c'$ are constants depending
  only on $c$ (and $p$). Thus, by assumption~\eqref{eq:w-bigger-than-v},
  condition~\eqref{eq:lower-vs-upper} also holds,
  and Lemma~\ref{lem:key} applies. This completes the proof
  of~\eqref{eq:thm-main:aim2}. To see~\eqref{eq:thm-main:aim}, fix
  $\varrho\in(0,1)$ and choose $n_0\in\bn$ such that $v_n\leq
  \varrho^{\frac12-\frac1{p'}}$ for all $n\geq n_0$. This is possible,
  since by~\eqref{eq:w-bigger-than-v} we have $v_n\to 0$ as
  $n\to\infty$. Note that
  \begin{equation}
    \label{eq:bound-on-Phi-m}
    \begin{aligned}
      \abs{\Phi_m(AI_{Y,Z}B)} &= \frac1{m} \sabs{\sum_{i=1}^m
        \Bigip{A^*\big(U^{-1}\big)^*e^{(m)}_{2,i}}{I_{Y,Z}B_m\big(g^{(m)}_i\big)}}
      \\[2ex]
      &\leq \frac1{m} \sum_{i=1}^m \bignorm{U^{-1}}\cdot
      \bignorm{I_{Y,Z}B_m\big(g^{(m)}_i\big)}_{Z}\ .
    \end{aligned}
  \end{equation}
  Fix $1\leq i\leq m$, and set $x=B_m\big(g^{(m)}_i\big)$. Write
  $x=\sum_{n=1}^\infty \sum_{j=1}^n x_{n,j}f^{(n)}_j$, and let
  $\sigma=\varrho\join\norm{x}_\infty$. Note that $v_n\leq
  \sigma^{\frac12-\frac1{p'}}$ for all
  $n\geq n_0$. Hence by Lemma~\ref{lem:F_n-to-ell_2^n} we have
  \[
  \left(\sum_{j=1}^n \abs{x_{n,j}}^2\right)^{\frac{q}2} \leq
  M\cdot \sigma^r\cdot \snorm{\sum_{j=1}^n x_{n,j}
    f^{(n)}_j}_{F_n}^p\qquad \text{for all }n\geq n_0\ ,
  \]
  where $M=\max\{C_p^p,K_p^q\}$ and $r=\min\big\{
  \frac{q}2-\frac{p}2\ ,\ \frac{q}2-\frac{q}{p'}\big\}$. It follows
  that
  \begin{align*}
    \bignorm{I_{Y,Z}B_m\big( &g^{(m)}_i\big)}_{Z} =   
    \norm{I_{Y,Z}(x)} = \left(\sum _{n=1}^\infty \bigg(\sum_{j=1}^n
    \abs{x_{n,j}}^2\bigg)^{\frac{q}2}\right)^{\frac1{q}}\\[2ex]
    &\leq \left(\sum _{n=1}^{n_0} \bigg(\sum_{j=1}^n
    \abs{x_{n,j}}^2\bigg)^{\frac{q}2}\right)^{\frac1{q}} +
    M^{\frac1{q}}\cdot \sigma^{\frac{r}{q}} \cdot \left( 
    \sum_{n>n_0} \biggnorm{\sum_{j=1}^n x_{n,j} f^{(n)}_j}_{F_n}^p
    \right)^{\frac1{q}} \\[2ex]
    &\leq \norm{x}_\infty\cdot \left( \sum_{n=1}^{n_0} n^{\frac{q}2}
    \right)^{\frac1{q}} + M^{\frac1{q}}\cdot \sigma^{\frac{r}{q}}
    \cdot \norm{x}_Y^{\frac{p}{q}} \leq \norm{x}_\infty\cdot N +
    M^{\frac1{q}}\cdot \sigma^{\frac{r}{q}}\\[2ex]
    & =  N\cdot \bignorm{B_m\big( g^{(m)}_i\big)}_\infty +
    M^{\frac1{q}}\cdot \sigma^{\frac{r}{q}}\ ,
  \end{align*}
  where we put $N=\big( \sum_{n=1}^{n_0} n^{\frac{q}2}
  \big)^{\frac1{q}}$. Hence, using~\eqref{eq:bound-on-Phi-m} and
  putting $\sigma^{(m)}_i=\varrho\join
  \bignorm{B_m\big(g^{(m)}_i\big)}_\infty$ for all $m\in\bn$ and
  $1\leq i\leq m$, we obtain
  \begin{align*}
    \bignorm{U^{-1}}^{-1} \cdot \abs{\Phi_m(AI_{Y,Z}B)} &\leq
    N\cdot\frac1{m}\sum _{i=1}^m
    \bignorm{B_m\big(g^{(m)}_i\big)}_\infty +  M^{\frac1{q}}\cdot \frac1{m}
    \sum_{i=1}^m \big(\sigma^{(m)}_i\big)^{\frac{r}{q}} \\[2ex]
    &\leq N\cdot\frac1{m}\sum _{i=1}^m
    \bignorm{B_m\big(g^{(m)}_i\big)}_\infty
    +  M^{\frac1{q}}\cdot \left(\frac1{m} \sum_{i=1}^m
    \sigma^{(m)}_i\right)^{\frac{r}{q}}\ .
  \end{align*}
  To see the second inequality note that $\frac{r}{q}<\frac12$, and so
  the function $t\mapsto t^{\frac{r}{q}}$ is concave. Now it follows
  from~\eqref{eq:thm-main:aim2} that
  \[
  \limsup_m \abs{\Phi_m(AI_{Y,q}B)} \leq \bignorm{U^{-1}} \cdot
  M^{\frac1{q}}\cdot \varrho^{\frac{r}{q}}\ .
  \]
  Since $\varrho>0$ was arbitrary, the proof
  of~\eqref{eq:thm-main:aim}, and hence of the theorem is complete.
\end{proof}

\begin{cor}
  \label{cor:distinct-ideals}
  Let $1<p<2$ and $p<q<\infty$. Let $\vv=(v_n)$ and $\vw=(w_n)$ be
  decreasing sequences in $(0,1]$ bounded below by $n^{-\eta}$,
  $\eta=\frac1{p}-\frac12$, and satisfying
  condition~\eqref{eq:w-bigger-than-v}. Then
  $\cJ^{I_{Y_{\vv},Z_q}}\subsetneq \cJ^{I_{Y_{\vw},Z_q}}$.
\end{cor}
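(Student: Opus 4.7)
The strictness half comes for free from Theorem~\ref{thm:main}: under the assumption~\eqref{eq:w-bigger-than-v} we have $T_\vw\in\cJ^{I_{Y_\vw,Z_q}}$ by construction, while $T_\vw\notin\cJ^{I_{Y_\vv,Z_q}}$ by the theorem. So everything reduces to the inclusion $\cJ^{I_{Y_\vv,Z_q}}\subseteq\cJ^{I_{Y_\vw,Z_q}}$. Since $\cJ^{I_{Y_\vw,Z_q}}$ is a non-trivial closed ideal of $\cL(\ell_p,\ell_q)$ it contains every compact operator, and by the ideal property it is enough to produce a factorization
\[
I_{Y_\vv,Z_q}=A\circ I_{Y_\vw,Z_q}\circ B+R
\]
with $B\in\cL(Y_\vv,Y_\vw)$, $A\in\cL(Z_q,Z_q)$, and $R$ of finite rank; any $A'I_{Y_\vv,Z_q}B'$ in $\cJ^{I_{Y_\vv,Z_q}}$ then splits as $(A'A)I_{Y_\vw,Z_q}(BB')+A'RB'$, both summands landing in $\cJ^{I_{Y_\vw,Z_q}}$.

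I would build $B$ and $A$ block-diagonally. Fix $c\in(0,1)$ and set $n_k=\lceil k^2/c\rceil$; then $\sqrt{cn_k}\geq k$, so $v_k\leq v_{\sqrt{cn_k}}$ by monotonicity of $\vv$, and~\eqref{eq:w-bigger-than-v} gives $w_{n_k}/v_k\geq w_{n_k}/v_{\sqrt{cn_k}}\to\infty$. In particular $v_k\leq w_{n_k}$ for all $k\geq k_0$, for some threshold $k_0$. On each block $F^{(k)}_{p,v_k}$ of $Y_\vv$ with $k\geq k_0$, define $B$ to map $f^{(k)}_{p,v_k,j}$ to $f^{(n_k)}_{p,w_{n_k},j}\in F^{(n_k)}_{p,w_{n_k}}\subseteq Y_\vw$ for $1\leq j\leq k$; on the finitely many remaining blocks set $B=0$. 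Because the first $k$ of the independent symmetric three-valued variables defining $F^{(n_k)}_{p,w_{n_k}}$ span an isometric copy of $F^{(k)}_{p,w_{n_k}}$, this restriction of $B$ is exactly the formal identity $F^{(k)}_{p,v_k}\to F^{(k)}_{p,w_{n_k}}$. Its norm is at most $K_p$ whenever $v_k\leq w_{n_k}$: by~\eqref{eq:xp-isom-p<2-n} the basis of each $F^{(k)}_{p,w}$ is $K_p$-equivalent to the biorthogonal basis of $(E^{(k)}_{p',w})^*$, and since $\|\cdot\|_{p',w}$ is non-decreasing in $w$ its dual norm is non-increasing in $w$. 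As the $n_k$ are pairwise distinct, this block-diagonal assignment defines a bounded $B\in\cL(Y_\vv,Y_\vw)$.

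For the other factor, define $A\colon Z_q\to Z_q$ by $e^{(n_k)}_{2,j}\mapsto e^{(k)}_{2,j}$ for $1\leq j\leq k$ and $A=0$ on every other basis vector of $Z_q$. This is a contraction: it is the composition of a coordinate projection (isometric on each source block $\ell_2^{n_k}$ restricted to its first $k$ coordinates) with a relabelling that drops these $\ell_2^k$-packets into pairwise distinct target blocks, which fits into the $\ell_q$-direct-sum structure with norm one. A direct check on basis vectors then confirms that $A\circ I_{Y_\vw,Z_q}\circ B$ coincides with $I_{Y_\vv,Z_q}$ on every block with $k\geq k_0$, so the error $R$ is indeed finite rank. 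The one delicate point of the whole argument is finding an index shift $k\mapsto n_k$ that leverages the asymptotic hypothesis~\eqref{eq:w-bigger-than-v} to force $v_k\leq w_{n_k}$; once the quadratic choice $n_k=\lceil k^2/c\rceil$ is in place the rest is routine verification.
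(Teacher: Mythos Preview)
Your strictness argument is exactly the paper's. For the inclusion, your construction is correct but considerably more elaborate than needed. The paper simply observes that~\eqref{eq:w-bigger-than-v} forces $v_n\leq w_n$ for all large~$n$: since $\lfloor\sqrt{cn}\rfloor\leq n$ and $\vv$ is decreasing, one has $v_n\leq v_{\sqrt{cn}}$, and hence $v_n/w_n\leq v_{\sqrt{cn}}/w_n\to 0$. Then the block-diagonal formal identity $I_{Y_\vv,Y_\vw}$ sending $f^{(n)}_{p,v_n,j}\mapsto f^{(n)}_{p,w_n,j}$ (same index $n$, same $j$) is bounded by $K_p$ on all but finitely many blocks via~\eqref{eq:xp-isom-p<2-n}, and $I_{Y_\vv,Z_q}=I_{Y_\vw,Z_q}\circ I_{Y_\vv,Y_\vw}$ up to a finite-rank correction. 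No quadratic reindexing $k\mapsto n_k$ and no auxiliary operator $A$ on $Z_q$ are needed.

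There is also a slip in your monotonicity step: from $\sqrt{cn_k}\geq k$ and $\vv$ \emph{decreasing} you would get $v_{\sqrt{cn_k}}\leq v_k$, the reverse of what you wrote. Your conclusion survives only because, with $n_k=\lceil k^2/c\rceil$ and the convention $v_x=v_{\lfloor x\rfloor}$, one actually has $k\leq\sqrt{cn_k}<k+1$ and hence $v_{\sqrt{cn_k}}=v_k$ exactly; but the argument as written is backwards. The cleaner route is to bypass the reindexing altogether as above.
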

\begin{proof}
  It follows from~\eqref{eq:w-bigger-than-v} that eventually $v_n\leq
  w_n$. Hence, using the notation of the proof of
  Theorem~\ref{thm:main}, the basis $\big(f^{(n)}_j\big)$ of $F_n$
  $K_p$-dominates the basis $\big(g^{(n)}_j\big)$ of $G_n$ for all
  large $n$. Indeed, this follows from~\eqref{eq:xp-isom-p<2-n}. Thus,
  $I_{Y_{\vv},Z_q}$ factors through $I_{Y_{\vw},Z_q}$ via the formal
  inclusion map $I_{Y_{\vv},Y_{\vw}}$, and thus
  $\cJ^{I_{Y_{\vv},Z_q}}\subset \cJ^{I_{Y_{\vw},Z_q}}$. The claim now
  follows immediately from Theorem~\ref{thm:main} since $T_{\vw}\in
  \cJ^{I_{Y_{\vw},Z_q}}$.
\end{proof}

Before proving Theorem~\ref{mainthm:infinite} we need to show that
certain maps are finitely strictly singular.

\begin{prop}
  \label{prop:inclusion-fss}
  Fix $1<p<2$ and $p<q<\infty$. Let $\vv=(v_n)$ be a decreasing
  sequence in $(0,1]$ bounded below by $n^{-\eta}$,
  $\eta=\frac1{p}-\frac12$, such that $v_n\to 0$ as $n\to\infty$. Set
  $Y=Y_{p,\vv}$. Then the formal inclusion maps $I_{Y,Z_q}$ and
  $I_{Z_{q'},Y^*}$ are finitely strictly singular.
\end{prop}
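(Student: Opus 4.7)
The plan is to exhibit both formal inclusions as compositions involving $I_{p,q}$ (respectively $I_{q',p'}$), each of which is finitely strictly singular by Milman's theorem~\cite{milman:70}, and to invoke that $\fss$ is a two-sided closed ideal. The substantive work is constructing a suitable bounded companion operator on $\ell_q$.

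For $I_{Y,Z_q}$ I would build a bounded operator $A\colon \ell_q \to Z_q$ with $A\bigl(f^{(n)}_j\bigr) = e^{(n)}_{2,j}$, where $f^{(n)}_j$ on the domain side is viewed as an element of the block $\ell_q^{k_n}$ in the decomposition $\ell_q = \big(\bigoplus_n \ell_q^{k_n}\big)_{\ell_q}$. Setting $A = \bigoplus_n A_n$ diagonally with
\[
A_n(g) = \sum_{j=1}^n \frac{\ip{g}{f^{(n)}_j}_{L_2}}{\bignorm{f^{(n)}_j}_{L_2}^2}\, e^{(n)}_{2,j}, \qquad g \in \ell_q^{k_n},
\]
delivers the required identity on basis vectors, thanks to $L_2$-orthogonality of the $f^{(n)}_j$ and $\bignorm{f^{(n)}_j}_{L_2}=1/v_n$. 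To bound $\norm{A_n}$ uniformly in $n$ I would pass to the adjoint: $A_n^*\bigl(e^{(n)}_{2,i}\bigr) = v_n^2 f^{(n)}_i$, so for $c\in\ell_2^n$,
\[
\bignorm{A_n^* c}_{L_{q'}} = v_n^2 \biggnorm{\sum_i c_i f^{(n)}_i}_{L_{q'}} \leq C_q\, v_n^2 \biggnorm{\sum_i c_i f^{(n)}_i}_{L_2} = C_q\, v_n\, \norm{c}_{\ell_2^n},
\]
where the inequality is the Kahane--Khintchine estimate for sums of independent symmetric random variables (for $q\geq 2$ it also follows directly from Bessel's inequality in $L_2$ together with $L_q\supset L_2$ on a probability space). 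Hence $\norm{A_n}\leq C_q$ for every $n$, and $A$ is bounded.

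With $J\colon Y\hookrightarrow\ell_p$ denoting the canonical inclusion of $Y$ as a complemented subspace, a basis-level check yields $I_{Y,Z_q} = A\circ I_{p,q}\circ J$, as each composition sends $f^{(n)}_j$ through $f^{(n)}_j\mapsto f^{(n)}_j\mapsto f^{(n)}_j\mapsto e^{(n)}_{2,j}$. Since Milman's theorem gives $I_{p,q}\in\fss$, the ideal property yields $I_{Y,Z_q}\in\fss$.

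The second assertion is handled by duality. Under the natural identifications $Z_q^*\cong Z_{q'}$ (self-duality of $\ell_2^n$) and $Y^*\cong\big(\bigoplus_n E^{(n)}_{p',v_n}\big)_{\ell_{p'}}$ (via the Rosenthal isomorphism $F_n\cong\bigl(E^{(n)}_{p',v_n}\bigr)^*$), one checks on basis vectors that the Banach-space adjoint $(I_{Y,Z_q})^*$ coincides with $I_{Z_{q'},Y^*}$. Dualizing the factorization gives $I_{Z_{q'},Y^*} = J^*\circ I_{q',p'}\circ A^*$, and since $1<q'<p'<\infty$, Milman's theorem supplies $I_{q',p'}\in\fss$; the ideal property then delivers $I_{Z_{q'},Y^*}\in\fss$. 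The main technical obstacle throughout is the uniform boundedness of $A$, handled by the adjoint-plus-Kahane--Khintchine computation above; the remaining manipulations with dualities and ideal properties are routine.
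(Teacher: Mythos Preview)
Your approach---factoring $I_{Y,Z_q}$ through the finitely strictly singular map $I_{p,q}$---is genuinely different from the paper's direct argument, but it has a fatal gap: the operator $A$ you construct need not be bounded.

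The problem is a conflation of two distinct norms. The embedding $J\colon Y\hookrightarrow\ell_p=\bigl(\bigoplus_n\ell_p^{k_n}\bigr)_{\ell_p}$ uses the isometry $L_p(\text{step fns})\to\ell_p^{k_n}$ given by $\bi_{\Omega_k}\mapsto\mu(\Omega_k)^{1/p}e_k$. After applying $I_{p,q}$ you land in $\ell_q^{k_n}$, \emph{not} in $L_q$ with its own isometric identification; the two differ by the multiplier $\mu(\Omega_k)^{1/p-1/q}$ on the $k$-th coordinate. Consequently the $L_2$-orthogonality of the $f^{(n)}_j$ does not translate into orthogonality for the standard inner product on $\br^{k_n}$, your formula for $A_n$ does not actually send $I_{p,q}J\bigl(f^{(n)}_j\bigr)$ to $e^{(n)}_{2,j}$, and your adjoint bound (computed in $L_{q'}$) does not control $\norm{A_n^*}_{\ell_2^n\to\ell_{q'}^{k_n}}$.

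More decisively, \emph{no} bounded $A$ can make the factorization $I_{Y,Z_q}=A\circ I_{p,q}\circ J$ work in general. For any such $A$ one would need $1=\bignorm{e^{(n)}_{2,j}}_{Z_q}\leq\norm{A}\cdot\bignorm{f^{(n)}_j}_{\ell_q^{k_n}}$. A direct computation over the $3^n$ product atoms gives
\[
\bignorm{f^{(n)}_j}_{\ell_q^{k_n}}^q = 2^{1-q/p}\,\alpha_n^{\,n-1},\qquad
\alpha_n = 2\Bigl(\tfrac{\mu_n}{2}\Bigr)^{q/p}+(1-\mu_n)^{q/p},\quad
\mu_n=v_n^{1/\eta}.
\]
Since $q/p>1$, for small $\mu_n$ one has $\alpha_n\approx 1-(q/p)\mu_n$, and hence $\alpha_n^{\,n-1}\to 0$ whenever $n\mu_n\to\infty$. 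Taking for instance $v_n\sim(\log n)^{-1}$ (which is decreasing, tends to $0$, and dominates $n^{-\eta}$), one gets $n\mu_n=n(\log n)^{-1/\eta}\to\infty$, so $\bignorm{f^{(n)}_j}_{\ell_q^{k_n}}\to 0$ and no bounded $A$ exists.

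The paper instead argues directly: given a high-dimensional subspace of $Y$, Milman's ``flat-vector'' lemma produces a unit vector $x$ with many maximal coordinates; Lemma~\ref{lem:lower-fund-fn-of-Yv} forces $\norm{x}_\infty$ small; and then the estimate of Lemma~\ref{lem:F_n-to-ell_2^n} (plus a finite tail) shows $\norm{I_{Y,Z_q}x}$ is small. The dual statement is proved analogously, not by dualising a factorization.
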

\begin{proof}
  For each $n\in\bn$ let $F_n=F^{(n)}_{p,v_n}$ and
  $E_n=\big(\br^n,\norm{\cdot}_{p',v_n}\big)$ with unit vector
  bases $\big\{ f^{(n)}_j:\, 1\leq j\leq n\big\}$ and $\big\{
  e^{(n)}_j:\, 1\leq j\leq n\big\}$, respectively. We first prove that
  $I_{Y,Z_q}$ is finitely strictly singular. Fix $\vare>0$. Choose
  $\varrho\in (0,1)$ such
  that $\varrho + M^{\frac1{q}}\cdot \varrho^{\frac{r}{q}}<\vare$,
  where, as before, $M=\max\{ C_p^p,K_p^q\}$,
  $r=\min\big\{\frac{q}2-\frac{p}2\ ,\ \frac{q}2-\frac{q}{p'}\big\}$
  and $C_p$ is the type-2 constant of $\ell_p$. Next fix $n_0\in\bn$
  such that $v_n\leq \varrho^{\frac12-\frac1{p'}}$ for all $n\geq
  n_0$. Set $N=\big( \sum_{n=1}^{n_0} n^{\frac{q}2}
  \big)^{\frac1{q}}$. Finally, choose $d\in\bn$ such that
  $K_p\cdot\frac{2v_d}{d}\cdot N < \varrho$.

  Now let $H$ be a subspace of $Y$ of dimension at least $2d^2$. Then
  by a result of V.~D.~Milman~\cite{milman:70} (see
  also~\cite{sari-schlump-tomczak-troitsky:07}*{Lemma 3.4}), there
  exists $x=\sum_{n=1}^\infty \sum_{j=1}^n x_{n,j} f^{(n)}_j\in H$
  such that $\abs{x_{m,i}}=\norm{x}_\infty=\sup_{n\in\bn,\ 1\leq j\leq
    n} \abs{x_{n,j}}$ for at least $2d^2$ pairs $(m,i)$. Hence by
  Lemma~\ref{lem:lower-fund-fn-of-Yv}, assuming as we may that
  $\norm{x}_Y=1$, we have
  \begin{equation}
    \label{eq:sup-norm-estimate}
    1=\norm{x}_Y\geq \norm{x}_\infty\cdot \lambda_Y\big(2d^2\big) \geq
    \norm{x}_\infty\cdot \frac1{K_p\sqrt{2}}\cdot \frac{d}{v_d}\ .
  \end{equation}
  Set $\sigma=\varrho\join \norm{x}_\infty$. As in the proof of
  Theorem~\ref{thm:main}, we obtain
  \[
  \norm{I_{Y,Z}(x)} \leq N\cdot \norm{x}_\infty +
  M^{\frac1{q}}\cdot \sigma^{\frac{r}{q}}\ .
  \]
  By~\eqref{eq:sup-norm-estimate} and by the choice of $d$, we have
  $N\cdot \norm{x}_{\infty}\leq K_p\cdot\frac{2v_d}{d}\cdot N < \varrho$. In
  particular, $\sigma=\varrho$, and hence the above gives
  \[
  \norm{I_{Y,Z}(x)} \leq \varrho +  M^{\frac1{q}}\cdot
  \varrho^{\frac{r}{q}} <\vare
  \]
  by the choice of $\varrho$.

  \noindent
  The proof for $I_{Z_{q'},Y^*}$ is similar. One first needs to prove
  a dual version
  of Lemma~\ref{lem:F_n-to-ell_2^n}, which is easier since in $E_n$
  we have an explicit formula for the norm, and then one needs to
  obtain a series of estimates as in the proof of
  Theorem~\ref{thm:main}. We first observe that $Y^*$ is isomorphic to
  $W=\big( \bigoplus_{n=1}^\infty E_n \big)_{\ell_{p'}}$
  by~\eqref{eq:xp-isom-p<2-n}, and so it is
  sufficient to show that the formal inclusion map $I_{Z_{q'},W}$ is
  finitely strictly singular. So let us fix $\vare>0$, and then choose
  $\varrho>0$ such that $\varrho+\varrho^{1-\frac{q'}{p'}}<\vare$. We
  may and shall assume that $p<q\leq 2$. Indeed, given $p<q_1<q_2$, we
  have $I_{Z_{q_2'},W}=I_{Z_{q_1'},W}\circ I_{Z_{q_2'},Z_{q_1'}}$. Now
  choose $n_0\in\bn$ such that $v_n<\varrho^{1-\frac{q'}{p'}}$ for
  all $n\geq n_0$. Set $N=\big( \sum_{n=1}^{n_0} n^{p'}
  \big)^{\frac1{p'}}$ and choose $d\in\bn$ with $d^{-\frac1{q'}}\cdot
  N<\varrho$.

  Given a subspace $H$ of $Z_{q'}$ of dimension at least $d$, use
  Milman's lemma again to find $x=\sum_{n=1}^\infty \sum_{j=1}^n
  x_{n,i} e^{(n)}_{2,j}\in H$ with $\norm{x}_{Z_{q'}}=1$ such that
  $\abs{x_{m,i}}=\norm{x}_\infty=\sup_{n\in\bn,\ 1\leq j\leq n}
  \abs{x_{n,j}}$ for at least $d$ pairs $(m,i)$. Since $2\leq q'$, we
  have
  \begin{equation}
    \label{eq:sup-norm-estimate2}  
    1=\norm{x}_{Z_{q'}} \geq \norm{x}_{\ell_{q'}}\geq \norm{x}_\infty
    \cdot d^{\frac1{q'}}\ \text{, and so }\norm{x}_\infty\leq
    d^{-\frac1{q'}} < \varrho\ .
  \end{equation}
  Now fix $n\in\bn$ with $n\geq n_0$. On the one hand, we have
  \[
  \sum_{j=1}^n \abs{x_{n,j}}^{p'} = \sum_{j=1}^n \abs{x_{n,j}}^{p'-q'}
  \abs{x_{n,j}}^{q'} \leq \varrho ^{p'-q'} \cdot\left( \sum_{j=1}^n
  \abs{x_{n,j}}^2 \right)^{\frac{q'}2}\ ,
  \]
  where we used $\norm{x}_\infty <\varrho$ and that $2\leq q'$. On the
  other hand, by the choice of $n_0$, and since $q'<p'$, we have
  \[
  v_n^{p'} \left( \sum_{j=1}^n \abs{x_{n,j}}^2 \right)^{\frac{p'}2}
  \leq \varrho^{p'-q'}\cdot \left( \sum_{j=1}^n \abs{x_{n,j}}^2
  \right)^{\frac{q'}2}\ .
  \]
  The previous two inequalities imply that
  \[
  \biggnorm{\sum_{j=1}^n x_{n,j} e^{(n)}_j}_{E_n}^{p'}\leq  \varrho ^{p'-q'}\cdot
  \biggnorm{\sum_{j=1}^n x_{n,j} e^{(n)}_{2,j}}_{\ell_2^n}^{q'}\ .
  \]
  We deduce the following estimates.
  \begin{align*}
    \norm{I_{Z_{q'},W}(x)} &= \left( \sum_{n=1}^\infty
    \biggnorm{\sum_{j=1}^n x_{n,j} e^{(n)}_j}_{E_n}^{p'}
    \right)^{\frac1{p'}} \\[2ex]
    &\leq \left( \sum_{n=1}^{n_0} \biggnorm{\sum_{j=1}^n x_{n,j}
      e^{(n)}_j}_{E_n}^{p'} \right)^{\frac1{p'}} +
    \varrho^{1-\frac{q'}{p'}} \cdot \left( \sum_{n>n_0}
    \biggnorm{\sum_{j=1}^n x_{n,j} e^{(n)}_{2,j}}_{\ell_2^n}^{q'}
    \right)^{\frac1{p'}}\\[2ex]
    &\leq \norm{x}_{\infty} \cdot N + \varrho^{1-\frac{q'}{p'}} \cdot
    \norm{x}_{Z_{q'}}^{\frac{q'}{p'}} \leq \varrho + \varrho^{1-\frac{q'}{p'}}
    <\vare\ ,
  \end{align*}
  where we recall $N=\big( \sum_{n=1}^{n_0} n^{p'}
  \big)^{\frac1{p'}}$, and we used~\eqref{eq:sup-norm-estimate2}, the
  choice of $d$, and the choice of $\varrho$.
\end{proof}

We are now ready to prove our main result.

\begin{proof}[Proof of Theorem~\ref{mainthm:infinite}]
  We first consider the case $1<p<2$ and $p<q<\infty$.
  Put $\eta=\frac1{p}-\frac12$, and define $f\colon \bn\to\br$ by
  setting $f(n)=n^{-\eta}$ for each $n\in\bn$. For an infinite set
  $M\subset \bn$ we define a decreasing sequence
  $\vw_M=\big(\vw_M(n)\big)_{n=1}^\infty$ in $(0,1]$ as follows. Let
  $m_1<m_2<\dots$ be the elements of $M$. We set $\vw_M(1)=1$, $\vw_M\big(
  2^{3^{m_k}}\big) = f(2^k)$ for each $k\in\bn$, and then extend the
  definition of $\vw_M$ to the rest of $\bn$ by linear interpolation. It
  is clear that $\vw_M(n)\geq n^{-\eta}$ for all $n\in\bn$. We
  will show that for infinite sets $M\subset N\subset\bn$ with
  $N\setminus M$ also infinite, the sequences $\vv=\vw_N$ and $\vw=\vw_M$
  satisfy condition~\eqref{eq:w-bigger-than-v} of
  Theorem~\ref{thm:main}. Hence, by
  Corollary~\ref{cor:distinct-ideals} we will have
  $\cJ^{I_{Y_{\vv},Z_q}}\subsetneq \cJ^{I_{Y_{\vw},Z_q}}$. Let us
  first explain how we complete the proof of our main theorem from
  here. We fix a chain $\cC$ of size the continuum consisting of
  infinite subsets of $\bn$ with any two having infinite
  difference. For $M\in\cC$ put $Y_M=Y_{p,\vw_M}$. By above, the closed
  ideals $\cJ^{I_{Y_M,Z_q}}$, $M\in\cC$, are pairwise distinct and
  comparable. Moreover, for each $M\in\cC$, the operator $I_{Y_M,Z_q}$
  is finitely strictly singular by
  Proposition~\ref{prop:inclusion-fss}, and is clearly not
  compact. Hence the ideal $\cJ^{I_{Y_M,Z_q}}$ lies between
  $\cJ^{I_{p,q}}$ and $\fss$.

  Using the same $p$ and $q$, we now consider ideals in
  $\cL(\ell_{q'},\ell_{p'})$. For each $M\in\cC$, we have
  $I_{Z_{q'},Y_M^*}=I_{Y_M,Z_{q}}^*$, and hence by simple duality we
  have
  \[
  \cJ^{I_{Z_{q'},Y_M^*}}(\ell_{q'},\ell_{p'}) = \big\{ T^*:\,
  T\in\cJ^{I_{Y_M,Z_{q}}} \big\}\ .
  \]
  Thus $\big\{\cJ^{I_{Z_{q'},Y_M^*}}:\,M\in\cC\big\}$ is a chain of
  closed ideals in $\cL(\ell_{q'},\ell_{p'})$ of size the
  continuum. By Proposition~\ref{prop:inclusion-fss}, the operators
  $I_{Z_{q'},Y_M^*}$, $M\in\cC$, are finitely strictly singular, and
  clearly not compact, so these ideals also lie between
  $\cJ^{I_{q',p'}}$ and $\fss$. Since $q'<p'$ and $2<p'$, we have
  covered all remaining cases.

  Let us now return to our claim: for infinite sets $M\subset
  N\subset\bn$ with $N\setminus M$ also infinite, the sequences
  $\vv=\vw_N$ and $\vw=\vw_M$ satisfy condition~\eqref{eq:w-bigger-than-v}
  of Theorem~\ref{thm:main}. Fix
  $l\in\bn$. We will show that for all sufficiently large $n\in\bn$,
  we have $\frac{\vw_N\big(n^{\frac13}\big)}{\vw_M(n)}\leq 2^{-\eta l}$,
  which proves the claim. Since $N\setminus M$ is infinite, there
  exists $k_0\in\bn$ such that for all $n\geq m_{k_0}$ we have
  $\abs{N\cap\{1,\dots,n\}}\geq \abs{M\cap\{1,\dots,n\}}+l+2$. Fix
  $n>2^{3^{m_{k_0+1}}}$. This defines $k>k_0$ such that
  $2^{3^{m_k}}\leq n <2^{3^{m_{k+1}}}$. It follows that
  \begin{equation}
    \label{eq:lower-bound-on-w}
    \vw_M(n)\geq \vw_M \big( 2^{3^{m_{k+1}}} \big) = f(2^{k+1})\ .
  \end{equation}
  Next define $k'\in\bn$ by $n_{k'}\leq m_k-1<n_{k'+1}$. By the choice
  of $k_0$, and since $m_k-1\geq m_{k_0}$, we have
  \[
  k'=\abs{N\cap \{1,\dots,m_k-1\}} \geq \abs{M\cap \{1,\dots,m_k-1\}}
  +l+2 = k+l+1\ .
  \]
  It follows that
  \begin{equation}
    \label{eq:upper-bound-on-v}
    \vw_N\big(n^{\frac13}\big) \leq \vw_N\big(2^{3^{m_k-1}}\big)  \leq
    \vw_N \big( 2^{3^{n_{k'}}} \big) =
    f(2^{k'}) \leq f(2^{k+l+1})\ .
  \end{equation}
  Putting together~\eqref{eq:lower-bound-on-w}
  and~\eqref{eq:upper-bound-on-v} yields
  \[
  \frac{\vw_N\big(n^{\frac13}\big)}{\vw_M(n)} \leq
  \frac{f(2^{k+l+1})}{f(2^{k+1})} = 2^{-\eta l}\ .
  \]
  This holds for any $n\geq m_{k_0}$, so the proof of our claim is
  complete.
\end{proof}

We conclude this section with a proof of
Theorem~\ref{mainthm:refinement}. This will be very similar to the
general case but simpler. We shall still rely on our key lemma from
Section~\ref{sec:key-lem}. From now on we fix $1<p<2<q<\infty$.
As usual, for a decreasing sequence $\vv=(v_n)$ in $(0,1]$
we consider the complemented subspace $Y_{\vv}=Y_{p,\vv}$ of $\ell_p$ with
corresponding projection $P_{\vv}=P_{p,\vv}$ as introduced in
Section~\ref{subsec:the-space-Yv}. Since $2<q$, the formal inclusion
$I_{Z_q,q}\colon Z_q=\big(\bigoplus_{n=1}^\infty
\ell_2^n\big)_{\ell_q} \to \ell_q=\big(\bigoplus _{n=1}^\infty
\ell_q^n\big)_{\ell_q}$ given by $I_{Z_q,q}\big( e^{(n)}_{2,j}\big) =
e^{(n)}_{q,j}$ is bounded, and hence, so is the formal inclusion
$I_{Y_{\vv},q}=I_{Z_q,q}\circ I_{Y_{\vv},Z_q}$. We shall consider the
closed ideal $\cJ^{I_{Y_{\vv},q}}$ which contains the operator
$S_{\vv}=I_{Y_{\vv},q} \circ P_{\vv}$.

As before, we first distinguish ideals corresponding to different
sequences.

\begin{thm}
  \label{thm:more-on-special-case}
  Let $\vv$ and $\vw$ be as in Theorem~\ref{thm:main}. Then
  $S_{\vw}\notin \cJ^{I_{Y_{\vv},q}}$.
\end{thm}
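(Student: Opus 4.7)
The plan is to mimic the proof of Theorem~\ref{thm:main}, constructing a separating functional $\Phi\in\cL(\ell_p,\ell_q)^*$ with $\Phi(S_{\vw})=1$ that annihilates $\cJ^{I_{Y_{\vv},q}}$. Since $S_{\vw}\big(g^{(m)}_i\big)=e^{(m)}_{q,i}$, the natural substitute for the functional used in Theorem~\ref{thm:main} is
\[
\Phi_m(V)=\frac{1}{m}\sum_{i=1}^m\bigip{e^{(m)}_{q',i}}{V\big(g^{(m)}_i\big)},\qquad V\in\cL(\ell_p,\ell_q),
\]
which satisfies $\Phi_m(S_{\vw})=1$ and $\norm{\Phi_m}\leq 1$ for every $m$. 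Passing to a $w^*$-accumulation point $\Phi$ of $(\Phi_m)$ yields $\Phi(S_{\vw})=1$, and it suffices to show that $\Phi_m(AI_{Y_{\vv},q}B)\to 0$ as $m\to\infty$ for every $A\in\cL(\ell_q)$ and $B\in\cL(\ell_p,Y_{\vv})$ with $\norm{A},\norm{B}\leq 1$.

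Let $B_m$ denote the restriction of $B$ to $G_m=F^{(m)}_{p,w_m}$. The bound $\norm{A^*e^{(m)}_{q',i}}_{\ell_{q'}}\leq 1$ gives at once
\[
\bigabs{\Phi_m(AI_{Y_{\vv},q}B)}\leq \frac{1}{m}\sum_{i=1}^m \bignorm{I_{Y_{\vv},q}B_m\big(g^{(m)}_i\big)}_{\ell_q}.
\]
The one new ingredient, and the only place where the hypothesis $q>2$ is used, is the factorization $I_{Y_{\vv},q}=I_{Z_q,q}\circ I_{Y_{\vv},Z_q}$ combined with $\norm{I_{Z_q,q}}\leq 1$ (since $\ell_2^n$ embeds isometrically into $\ell_q^n$ for $q\geq 2$); this gives $\bignorm{I_{Y_{\vv},q}x}_{\ell_q}\leq \bignorm{I_{Y_{\vv},Z_q}x}_{Z_q}$ for every $x\in Y_{\vv}$, reducing the quantity to be estimated to exactly the one controlled in the proof of Theorem~\ref{thm:main}.

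From here the argument is a transcription of that proof. Proposition~\ref{prop:properties-of-Fn}(iii), Lemma~\ref{lem:lower-fund-fn-of-Yv} and hypothesis~\eqref{eq:w-bigger-than-v} verify conditions~\eqref{eq:varf-sublinear} and~\eqref{eq:lower-vs-upper} of Lemma~\ref{lem:key}, delivering $\frac{1}{m}\sum_{i=1}^m \bignorm{B_m(g^{(m)}_i)}_\infty\to 0$. For any preassigned $\varrho\in(0,1)$, choosing $n_0$ with $v_n\leq\varrho^{\frac12-\frac1{p'}}$ for $n\geq n_0$ and applying Lemma~\ref{lem:F_n-to-ell_2^n} block-by-block yields
\[
\bignorm{I_{Y_{\vv},Z_q}B_m\big(g^{(m)}_i\big)}_{Z_q}\leq N\cdot \bignorm{B_m\big(g^{(m)}_i\big)}_\infty + M^{\frac1{q}}\cdot\big(\varrho\join\bignorm{B_m\big(g^{(m)}_i\big)}_\infty\big)^{\frac{r}{q}},
\]
with the same constants $M$, $N$, $r$ as in the proof of Theorem~\ref{thm:main}. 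Averaging over $i$ and using the concavity of $t\mapsto t^{r/q}$ gives $\limsup_m\abs{\Phi_m(AI_{Y_{\vv},q}B)}\leq M^{\frac1{q}}\varrho^{\frac{r}{q}}$, and letting $\varrho\to 0$ finishes the proof. There is no essentially new obstacle: everything beyond the one-line reduction $\norm{\cdot}_{\ell_q}\leq\norm{\cdot}_{Z_q}$ is a direct replay of Theorem~\ref{thm:main}.
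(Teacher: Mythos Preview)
Your proof is correct, but it takes a slightly different route from the paper's. Both arguments use the same separating functionals $\Phi_m$, the same $w^*$-accumulation point $\Phi$, the same reduction to bounding $\frac{1}{m}\sum_i\bignorm{I_{Y_{\vv},q}B_m(g^{(m)}_i)}_{\ell_q}$, and the same appeal to Lemma~\ref{lem:key} via Proposition~\ref{prop:properties-of-Fn}(iii), Lemma~\ref{lem:lower-fund-fn-of-Yv} and~\eqref{eq:w-bigger-than-v}. The divergence is in how $\bignorm{I_{Y_{\vv},q}(x)}_{\ell_q}$ is controlled for $x\in Y_{\vv}$ with $\norm{x}_{Y_{\vv}}\leq 1$.

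You use the contraction $\norm{\cdot}_{\ell_q}\leq\norm{\cdot}_{Z_q}$ (a small wording slip: the formal identity $\ell_2^n\to\ell_q^n$ is a contraction for $q\geq 2$, not an isometric embedding) to reduce to $\bignorm{I_{Y_{\vv},Z_q}(x)}_{Z_q}$, and then replay the full apparatus of Theorem~\ref{thm:main}: the choice of $n_0$, Lemma~\ref{lem:F_n-to-ell_2^n} applied block by block, the parameter $\varrho$, and the concavity of $t\mapsto t^{r/q}$. The paper instead exploits $q>2$ more directly: from $\sum_j\abs{x_{n,j}}^q\leq\norm{x}_\infty^{q-2}\sum_j\abs{x_{n,j}}^2\leq C_p^2\,\norm{x}_\infty^{q-2}\,\bignorm{\sum_j x_{n,j}f^{(n)}_j}_{F_n}^p$ one gets $\bignorm{I_{Y_{\vv},q}(x)}_{\ell_q}\leq C_p^{2/q}\norm{x}_\infty^{1-2/q}$ in one line, and concavity of $t\mapsto t^{1-2/q}$ finishes the argument without ever invoking Lemma~\ref{lem:F_n-to-ell_2^n}, choosing $n_0$, or introducing $\varrho$. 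Your approach has the virtue of being a literal reduction to Theorem~\ref{thm:main}; the paper's buys a substantially shorter and more self-contained endgame.
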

\begin{proof}
  We follow closely the proof of Theorem~\ref{thm:main}. For each
  $n\in\bn$ let $F_n=F^{(n)}_{p,v_n}$ and $G_n=F^{(n)}_{p,w_n}$ with
  bases $\big\{ f^{(n)}_j:\, 1\leq j\leq n\big\}$ and
  $\big\{ g^{(n)}_j:\, 1\leq j\leq n\big\}$, respectively. To simplify
  notation we write $Y=Y_{\vv}$ and $S=S_{\vw}$. Thus
  \[
  I_{Y,q}\colon Y= \left(\bigoplus _{n=1}^\infty F_n
  \right)_{\ell_p}\to \ell_q=\left( \bigoplus_{n=1}^\infty \ell_q^n
  \right)_{\ell_q}
  \]
  is given by $I_{Y,q}\big(f^{(n)}_j\big) = e^{(n)}_{q,j}$, and
  $S\colon \ell_p=\big(\bigoplus _{m=1}^\infty \ell_p^{k_m}\big)_{\ell_p}
  \to \ell_q$ is the composite $S=I_{Y_{\vw},q}\circ P_{\vw}$. Note
  that $S\big(g^{(m)}_i\big)=e^{(m)}_{q,i}$.

  For each $m\in\bn$ we define $\Phi_m\in
  \cL(\ell_p,\ell_q)^*$ by setting
  \[
  \Phi_m(V)= \frac1{m} \sum_{i=1}^m
  \bigip{e^{(m)}_{q',i}}{V(g^{(m)}_i)}\ ,\qquad
  V\in\cL(\ell_p,\ell_q)\ .
  \]
  Since $\norm{\Phi_m}\leq 1$ for all $m$, the sequence
  $(\Phi_m)$ has a $\omega^*$-accumulation point $\Phi$ in the unit
  ball of $\cL(\ell_p,\ell_q)^*$. Note that $\Phi_m(S)=1$ for all $m$, and
  hence $\Phi(S)=1$. The proof will be complete if we can show that
  $\cJ^{I_{Y,q}}$ is contained in the kernel of $\Phi$. To see this,
  fix $A\in \cL(\ell_q)$ and $B\in\cL(\ell_p,Y)$ with $\norm{A}\leq
  1$ and $\norm{B}\leq 1$. It is sufficient to show that
  \begin{equation}
    \label{eq:thm-more:aim}
    \Phi_m(AI_{Y,q}B)\to 0\qquad\text{as}\qquad m\to\infty\ .
  \end{equation}
  Let $B_m\colon G_m\to Y$ denote the restriction to $G_m$ of
  $B$. Exactly as in Theorem~\ref{thm:main} we use Lemma~\ref{lem:key}
  to show that
  \begin{equation}
    \label{eq:thm-more:aim2}
    \frac1{m}\sum_{i=1}^m \bignorm{B_m\big(g^{(m)}_i\big)}_{\infty}
    \to 0 \quad\text{as} \quad m\to\infty\ ,
  \end{equation}
  and obtain the estimate
  \begin{equation}
    \label{eq:more:bound-on-Phi-m}
    \abs{\Phi_m(AI_{Y,q}B)} \leq \frac1{m} \sum_{i=1}^m
    \bignorm{I_{Y,q}B_m\big(g^{(m)}_i\big)}_{\ell_q}\ .
  \end{equation}
  At this point we depart from the proof of Theorem~\ref{thm:main} as
  the argument becomes simpler. Let $x=\sum_{n=1}^\infty \sum_{j=1}^n
  x_{n,j} f^{(n)}_{j}\in Y$ with $\norm{x}_Y\leq 1$. Then for each
  $n\in\bn$ we have
  \begin{align*}
    \sum_{j=1}^n \abs{x_{n,j}}^q &= \sum_{j=1}^n
    \abs{x_{n,j}}^{q-2}\cdot \abs{x_{n,j}}^{2} \leq C_p^2\cdot
    \norm{x}_\infty^{q-2} \cdot \biggnorm{\sum_{j=1}^n
      x_{n,j}f^{(n)}_j}_{F_n}^2\\[2ex]
    &\leq C_p^2\cdot\norm{x}_\infty^{q-2} \cdot \biggnorm{\sum_{j=1}^n
      x_{n,j}f^{(n)}_j}_{F_n}^p\ . 
  \end{align*}
  It follows that
  \begin{align*}
    \norm{I_{Y,q}(x)}_{\ell_q}^q & = \sum_{n=1}^\infty \sum_{j=1}^n
    \abs{x_{n,j}}^q \leq C_p^2\cdot\norm{x}_\infty^{q-2} \cdot \sum_{n=1}^\infty
    \biggnorm{\sum_{j=1}^n x_{n,j}f^{(n)}_j}_{F_n}^p \\[2ex]
    &= C_p^2\cdot\norm{x}_\infty^{q-2} \cdot \norm{x}_Y^{p}\leq
    C_p^2\cdot\norm{x}_\infty^{q-2}\ .
  \end{align*}
  Applying this to~\eqref{eq:more:bound-on-Phi-m} we obtain
  \[
  \abs{\Phi_m(AI_{Y,q}B)} \leq C_p^{\frac2{q}}\cdot\frac1{m} \sum_{i=1}^m
  \bignorm{B_m\big(g^{(m)}_i\big)}_{\infty}^{1-\frac2{q}} \leq
  C_p^{\frac2{q}}\cdot \left(\frac1{m} \sum_{i=1}^m
  \bignorm{B_m\big(g^{(m)}_i\big)}_{\infty}\right)^{1-\frac2{q}}\ ,
  \]
  using the fact that the function $t\mapsto t^{1-\frac2{q}}$ is
  concave. Now we use~\eqref{eq:thm-more:aim2} to
  deduce~\eqref{eq:thm-more:aim}, as required.
\end{proof}

\begin{proof}[Proof of Theorem~\ref{mainthm:refinement}]
   The proof of Theorem~\ref{mainthm:infinite} provides a continuum
   size chain $\cC$ of infinite subsets of $\bn$, and corresponding
   sequences $\vw_M$, $M\in\cC$. In turn, this leads to spaces
   $Y_M=Y_{p,\vw_M}$ and closed ideals $\cJ^{I_{Y_M,q}}$. The proof of
   Theorem~\ref{mainthm:infinite} shows that if $M,N\in\cC$ and
   $M\subset N$, then $\vv=\vw_N$ and $\vw=\vw_M$ satisfy the
   hypotheses in Theorem~\ref{thm:main}. In particular, the unit
   vector basis of $Y_N$ dominates the unit vector basis of $Y_M$, and
   so $\cJ^{I_{Y_N,q}}\subset \cJ^{I_{Y_M,q}}$. Moreover, by
   Theorem~\ref{thm:more-on-special-case}, this inclusion is
   strict. Thus, the family $\big\{ \cJ^{I_{Y_M,q}}:\,M\in\cC\big\}$
   of closed ideals of $\cL(\ell_p.\ell_q)$ is a chain and has size
   the continuum.

   Finally, for each $M\in\cC$, the operator $I_{Y_M,q}$ factors
   through the formal inclusion $I_{2,q}\colon \ell_2=\big( \bigoplus
   _{n=1}^\infty \ell_2^n \big)_{\ell_2} \to \ell_q=\big( \bigoplus
   _{n=1}^\infty \ell_q^n \big)_{\ell_q}$ via the formal inclusion
   $I_{Y,2}$. Thus $\cJ^{I_{Y_M,q}}$ is contained in $\cJ^{I_{2,q}}$,
   and contains $\cJ^{I_{p,q}}$ since $I_{Y_M,q}$ is not compact.
\end{proof}

\section{Open problems}
\label{sec:problems}

There are a number of natural questions that remain or arise after our
work. The first aim would be to answer Pietsch's question in the range
$1\leq p<q$.

\begin{problem}
  Given $1\leq p<q<\infty$, are there infinitely many closed ideals in
  $\cL(\ell_1,\ell_q)$ or in $\cL(\ell_p,\co)$?
\end{problem}

Even in the reflexive range we do not know the exact number of closed
ideals. 

\begin{problem}
  Given $1<p<q<\infty$, find the cardinality of the set of closed
  ideals in $\cL(\ell_p,\ell_q)$.
\end{problem}
We now know this is at least $\continuum$. On the other hand, it is
clear that the cardinality of $\cL(\ell_p,\ell_q)$ is $\continuum$,
and hence there can be at most $2^\continuum$ closed ideals. Of
course, one could pose the same problem with $\ell_1$ replacing
$\ell_p$, etc.

Note that in the case $1<p<2<q<\infty$ we constructed two continuum
chains of closed ideals. Are these equal? More generally, we
could ask the following question about the lattice structure of closed
ideals in $\cL(\ell_p,\ell_q)$.

\begin{problem}
  Do the closed ideals of $\cL(\ell_p,\ell_q)$, after ignoring a
  finite number of them, form a chain?
\end{problem}

So far all our new ideals are generated by a single operator. Note
that if $T$ is a compact operator of infinite rank, then
$\cK=\cJ^T$. It is then natural to ask the following.

\begin{problem}
  Is $\fss$ generated by one operator? Are all closed ideals of
  $\cL(\ell_p,\ell_q)$ generated by one operator?
\end{problem}

Another candidate of a closed ideal, not representable by a single
operator could be the closure of the union of one of the chains we
defined in the previous section.

\begin{bibdiv}
\begin{biblist}
 
\bib{dkosz}{article}{
  author={Dilworth, S. J.},
  author={Kutzarova, D.},
  author={Odell, E.},
  author={Schlumprecht, Th.},
  author={Zs{\'a}k, A.},
  title={Renorming spaces with greedy bases},
  journal={J. Approx. Theory},
  status={to appear},
}

\bib{lt:79}{book}{
  author={Lindenstrauss, Joram},
  author={Tzafriri, Lior},
  title={Classical Banach spaces. II},
  series={Ergebnisse der Mathematik und ihrer Grenzgebiete [Results in Mathematics and Related Areas]},
  volume={97},
  note={Function spaces},
  publisher={Springer-Verlag, Berlin-New York},
  date={1979},
  pages={x+243},
  isbn={3-540-08888-1},
  review={\MR {540367 (81c:46001)}},
}

\bib{milman:70}{article}{
  author={Milman, V. D.},
  title={Operators of class $C_{0}$ and $C\sp *_{0}$},
  language={Russian},
  journal={Teor. Funkci\u \i \ Funkcional. Anal. i Prilo\v zen.},
  number={10},
  date={1970},
  pages={15--26},
  issn={0321-4427},
  review={\MR {0295134 (45 \#4202)}},
}

\bib{pietsch:78}{book}{
  author={Pietsch, Albrecht},
  title={Operator ideals},
  series={Mathematische Monographien [Mathematical Monographs]},
  volume={16},
  publisher={VEB Deutscher Verlag der Wissenschaften, Berlin},
  date={1978},
  pages={451},
  review={\MR {519680 (81a:47002)}},
}

\bib{pietsch:02}{article}{
  author={Pietsch, Albrecht},
  title={A 1-parameter scale of closed ideals formed by strictly singular operators},
  conference={ title={Toeplitz matrices and singular integral equations (Pobershau, 2001)}, },
  book={ series={Oper. Theory Adv. Appl.}, volume={135}, publisher={Birkh\"auser, Basel}, },
  date={2002},
  pages={261--265},
  review={\MR {1935769 (2003g:47126)}},
}

\bib{rosenthal:70a}{article}{
  author={Rosenthal, Haskell P.},
  title={On the subspaces of $L^{p}$ $(p>2)$ spanned by sequences of independent random variables},
  journal={Israel J. Math.},
  volume={8},
  date={1970},
  pages={273--303},
  issn={0021-2172},
  review={\MR {0271721 (42 \#6602)}},
}

\bib{sari-schlump-tomczak-troitsky:07}{article}{
  author={Sari, B.},
  author={Schlumprecht, Th.},
  author={Tomczak-Jaegermann, N.},
  author={Troitsky, V. G.},
  title={On norm closed ideals in $L(l_p,l_q)$},
  journal={Studia Math.},
  volume={179},
  date={2007},
  number={3},
  pages={239--262},
  issn={0039-3223},
  review={\MR {2291733 (2008a:47120)}},
  doi={10.4064/sm179-3-3},
}

\bib{schlump:12}{article}{
  author={Schlumprecht, Th.},
  title={On the closed subideals of $L(\ell _p\oplus \ell _q)$},
  journal={Oper. Matrices},
  volume={6},
  date={2012},
  number={2},
  pages={311--326},
  issn={1846-3886},
  review={\MR {2976049}},
  doi={10.7153/oam-06-22},
}

\end{biblist}
\end{bibdiv}

\end{document}